\theoremstyle{plain}
\newtheorem{theorem}{Theorem}[section]
\newtheorem{lemma}[theorem]{Lemma}
\newtheorem{proposition}[theorem]{Proposition}
\newtheorem{corollary}[theorem]{Corollary}
\theoremstyle{definition}
\newtheorem{definition}[theorem]{Definition}
\newtheorem{remark}[theorem]{Remark}
\newtheorem{example}[theorem]{Example}
\newcommand{\argmin}{\mathrm{argmin}}
\renewcommand{\leq}{\leqslant}
\renewcommand{\geq}{\geqslant}
\newcommand{\R}{\mathbb{R}}
\newcommand{\N}{\mathbb{N}}
\newcommand{\diam}{\mathrm{diam}}
\newcommand{\epi}{\mathrm{epi}}
\newcommand{\cconv}{\overline{\mathrm{conv}}\:}
\renewcommand{\epsilon}{\varepsilon}
\renewcommand{\phi}{\varphi}
\renewcommand{\tilde}{\widetilde}
\author[C.A.~De~Bernardi]{Carlo Alberto De Bernardi}
\address{Dipartimento di Matematica per le Scienze economiche, finanziarie ed attuariali, Universit\`a Cattolica del Sacro Cuore, Via Necchi 9, 20123 Milano, Italy}
\email{carloalberto.debernardi@unicatt.it}
\email{carloalberto.debernardi@gmail.com}
\author[E.~Miglierina]{Enrico Miglierina
}
\address{Dipartimento di Matematica per le Scienze economiche, finanziarie ed attuariali, Universit\`{a} Cattolica del Sacro Cuore, Via Necchi 9, 20123 Milano, Italy}
\email{enrico.miglierina@unicatt.it}
\author[E.~Molho]{Elena Molho
}
\address{Dipartimento di Scienze economiche e Aziendali, Universit\`{a} degli Studi di Pavia, Via San Felice 5, 27100 Pavia, Italy}
\email{elena.molho@unipv.it}
\author[J.~Somaglia]{Jacopo Somaglia}
\address{Politecnico di Milano, Dipartimento di Matematica, Piazza Leonardo da Vinci 32, 20133 Milano, Italy.}
\email{jacopo.somaglia@polimi.it}
 \subjclass[2020]{Primary 90C29, 	46N10; Secondary 	90C25 }
\keywords{Multiobjective optimization, Continuity of solution map, convex combinations of convex functions, small diameter property}
\thanks{The research of the authors has been partially
supported by the GNAMPA (INdAM -- Istituto Nazionale di Alta Matematica). C.A. De Bernardi, E. Miglierina and E. Molho has been partially supported by the Ministry for Science and Innovation, Spanish State Research Agency (Spain),  under project PID2020-112491GB-I00. 
}
\title{Stochastic Approximation in convex multiobjective optimization}
\begin{document}

\begin{abstract}

Given a strictly convex multiobjective optimization problem with objective functions $f_1,\dots,f_N$, let us denote by $x_0$ its solution, obtained as
 minimum point of the linear scalarized problem, where the objective function is the convex combination of $f_1,\dots,f_N$  with weights $t_1,\ldots,t_N$.
The main result of this paper 
gives an estimation of the averaged error that we make if we approximate $x_0$ with the minimum point of the convex combinations of $n$ functions,  chosen among $f_1,\dots,f_N$,  with probabilities $t_1,\ldots,t_N$, respectively, and  weighted with the same coefficient $\nicefrac{1}{n}$.
In particular, we prove that the averaged error considered above converges to 0 as $n$ goes to $\infty$, uniformly w.r.t. the weights $t_1,\ldots,t_N$.  
The key tool in the proof of our stochastic  approximation theorem is a geometrical property, called by us small diameter property, ensuring
 that the minimum point of a convex combination of the function $f_1,\dots,f_N$ continuously depends on the coefficients of the convex combination.
 
\end{abstract}

\maketitle


\markboth{}{}

\section{Introduction}
The main aim of the present paper is to develop a result about the approximation of solutions of a convex multiobjective optimization problem in the spirit of  \cite[Theorem~1, p.28]{BE}. In that paper the authors provided a result whose meaning was explained by the authors themselves (the quotation is translated in English since the original paper is written in French, see \cite[ Remark at p. 30]{BE}):

``This theorem means that to calculate a Pareto Optimum for the functions $f_1,\dots,f_N$, it suffices, approximately, to calculate it for $n$ of them, and, better, most of the choices of $n$ elements among the $N$ will give a good result."

In order to make clear the comment above, we briefly describe what is a multiobjective optimization problem. For a complete overview about this field we refer the reader to the, now classical, monographs \cite{Luc} and \cite{Jahn}. Let $f_1,\dots,f_N$ be $N$ functions from a normed space $X$ to $\R$. A point $x_0\in X$ is a Pareto Optimum point (or efficient point) for $f_1,\dots,f_N$ when there is no $x \in X$ such that $f_k(x) \leq f_k(x_0)$ for $k = 1,\dots, N$ and $f_i(x) <f_i(x_0)$ for some $i \in \{1,\dots, N\}$. One of the most common technique to find the Pareto Optimum points is the so called linear scalarization, i.e., to solve a family of scalar optimization problems where the objective function is given by a convex combination of the functions $f_1,\dots,f_N$ (see, e.g.,  \cite[Ch.4]{Luc} and  \cite[Ch.5]{Jahn}). In our paper we deal with a special case of linear scalarization for a multiobjective optimization problem where, under convexity assumptions on the functions $f_1,\dots,f_N$, each Pareto Optimum is completely characterized as minimum point of a convex combinations of the functions $f_1,\dots,f_N$. Namely, if the functions $f_1,\dots,f_N$ are strictly convex, a point $x_0$ is a Pareto Optimum point for $f_1,\dots, f_N$ if and only if there exists $t=(t_1,\dots, t_N)\in \Sigma_N:=\{t\in [0,1]^N\colon\sum_{i=1}^N t_i=1\}$ such that  $x_0 $ is a minimum point for the scalar function $u_t(x)=\sum_{k=1}^{N}t_kf_k(x)$ (see  \cite[Lemma~1, p. 27]{BE}).  

The relevance of linear scalarization in the theory and practice of multiobjective optimization motivates the study of the properties of a convex combinations of functions. Moreover, as we will see, a key tool 
to obtain a result in the spirit of \cite{BE}, will be a geometrical property ensuring
 that the minimum point of a convex combination of continuous strictly convex function $f_1,\dots,f_N$ continuously depends on the coefficients of the convex combination.

The approach developed by Enflo and Beauzamy in \cite{BE} is deeply original with respect to the field of multiobjective optimization and gives an unusual viewpoint about the approximation of solution set. Despite these interesting features, the paper \cite{BE} seems not to have been widely considered in the literature about multiobjective optimization. For this reason, we are interested in  studying this result and try to weaken its assumptions. The key point of the approach of \cite{BE} is to study the Pareto Optimum points of a convex multiobjective optimization problem with objective functions $f_1,\dots,f_N$, by identifying these points with the solutions of minimum points of convex combinations of $f_1,\dots,f_N$. In particular, we need to ensure the continuity of the map $\phi(t)=\argmin(u_t)$ where $t \in \Sigma_N$. Moreover, in view of linear scalarization procedure, this type of result is interesting in itself.

In the present paper we restrict our attention to the case where the functions $f_1,\dots,f_N$ are such that, for every $k=1,\dots,N$:
\begin{enumerate}[(a)]
\item $f_k$ is a continuous strictly convex function that is bounded on bounded sets;
\item $f_k$ is coercive (i.e. for each $C\in \R$ the set $\{x\in X\colon f(x)\leq C\}$ is bounded).
\end{enumerate}
Under these assumptions, it is quite straightforward to show that the function $\phi$ is continuous whenever $X$ is a finite dimensional normed space. On the other hand, it is not always possible to prove the continuity of the map $\varphi$ when $X$ is infinite-dimensional. Indeed, we provide an example of two functions defined on the Hilbert space $\ell_2$, where the corresponding function $\phi$ is not continuous, even if the assumptions (a)-(b) above hold (see Example \ref{e: argmindiscontinuo}).  The continuity of $\phi$ can be obtained by adding a property, already introduced in \cite{C1975} and \cite{VolleZali} under different terminologies, that here we call small diameter property. 
Section \ref{sec: notation}  is mainly devoted to the study of small diameter property. It is worth pointing out that in Proposition \ref{t: liftingnorms} we prove a result that allows to lift strongly exposed point from the norm of $x$ to a function $f:X\rightarrow \R$, hence providing a good tool to find examples of functions satisfying small diameter property.  In Section \ref{sec: nonstronglyexposed}, it is shown that $\phi$ is continuous if the functions $f_1,\dots,f_N$ satisfy the small diameter property and additionally properties (a)-(b) mentioned above. Finally, Section \ref{sec: generalizzazioneEB} provides a  ``stochastic" approximation of Pareto Optimum points for $f_1,\dots,f_N$ not depending on the choice of the value $t$ associated to each Pareto Optimum (Theorem \ref{t: mainApplication}). This theorem can be applied to a larger setting than the result of \cite{BE}, as proved in Remark \ref{rem:costruzioneCarloJacopo}, where, by using special norms built up in \cite{DEBESOM-ALUR}, we prove that there exists a set of functions $f_1,\dots,f_N$ not satisfying the assumptions of   \cite[Theorem~1, p.28]{BE} but enjoying the conditions required by our result. Moreover, an example shows that the small diameter property assumption cannot be dropped.

\section{Notation and preliminaries}\label{sec: notation}

In this section we introduce some notions and basic results that we shall need in the sequel of the paper. The section is divided in two parts: the former is devoted to the study of geometrical properties of convex sets and functions, the latter to a brief introduction to  multiobjective optimization.
We consider only nontrivial real normed linear spaces. 
If $X$ is a normed space with topological dual  $X^*$, then  $B_X$, $U_X$, and $S_X$ are the closed unit ball, the open unit ball, and the unit sphere of $X$, respectively. We refer to \cite{ALKA}, \cite{FHHMZ} and \cite{PhelpsBook} for unexplained notation and terminology.

\subsection{Geometrical properties of convex sets and functions}

Let $D$ be a  convex subset of a normed space $X$. A point $x\in D$ is \textit{supported} by a linear functional $x^*\in X^*\setminus\{0\}$ if $x^*(x)=\sup_D x^*$. A point $x\in D$ is called an \textit{exposed point} of $D$ if $x$ is supported by $x^*\in X^*$ and $\{y\in D\colon x^*(y)=x^*(x)\}=\{x\}$. A point $x\in D$ is  \textit{strongly exposed} by $x^*\in X^*$ if it is supported by $x^*$ and $x_n\to x$ for all sequences $\{x_n\}\subset D$ such that $\lim_{n\to \infty}x^*(x_n)=\sup_{D}x^*$. In this case, $x$ is also strongly exposed by $\lambda x^*$, for any $\lambda>0$. Moreover, a point $x\in D$ is strongly exposed by $x^*\in X^*\setminus\{0\}$ if and only $x^*$ is bounded over $D$ and  $\diam\, S(x^*,\delta,D)\to 0$ as $\delta\to 0$, where 
\begin{equation*}
{S(x^*,\delta,D):=\{y\in D\colon x^*(s)>\sup_{D} x^*-\delta\}.    }
\end{equation*}

Let  $f:D\to\R$ be a convex continuous function. By $$\epi(f):=\{(x,t)\in D\times \R\colon t\geq f(x)\}$$ we denote the \textit{epigraph} of the function $f$. Moreover, $\partial f(x)$ denotes the \textit{subdifferential} of $f$ at $x$. A point $(x,f(x))\in\epi(f)$ is supported by $(x^*,-1)$ if and only if $x^*\in\partial f(x)$.  Finally, by $\R^+$ we denote the interval $(0,\infty)$.

\begin{definition}
Let $D$ be a convex subset of $X$,  $x\in D$, and $f\colon D\to\R$.
\begin{itemize}
    \item Let $x^*\in\partial f(x)$, we say that $x$ is an \textit{$x^*$-small diameter point} for $f$ if each sequence $\{x_n\}\subset D$, satisfying  $x^*(x_n)-f(x_n)\to x^*(x) - f(x)$, is norm convergent.
    \item If, for each $x^*\in \partial f(x)$, the point $x$ is an $x^*$-small diameter point for $f$,  then
we say that  $x$ is a \textit{small diameter point} for $f$.
\item $f$ has the \textit{small diameter property} (SDP) if each $y\in D$ is a small diameter point for $f$.
\end{itemize}
\end{definition}

We recall the following result which relates small diameter points for $f$  in $X$ with strongly exposed points of $\epi(f)$ in $X\times \R$.

\begin{lemma}[\cite{C1975}]\label{lemma:SDPperepi}
Let $D$ be a convex subset of $X$, $f\colon D\to \R$ a continuous convex function, $x\in X$, and $x^*\in \partial f(x)$. Then $x$ is an $x^*$-small diameter point for $f$  if and only if $(x,f(x))$ is a strongly exposed point of $\epi(f)$ by $(x^*,-1)$. 
\end{lemma}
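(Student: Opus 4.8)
The plan is to unwind both conditions into statements about the concave function $g(y):=x^*(y)-f(y)$ on $D$ and then match them. First I would record the standing observation that, since $x^*\in\partial f(x)$, one has $g(y)\leq g(x)$ for every $y\in D$; consequently $\sup_{\epi(f)}(x^*,-1)=\sup_{y\in D}\bigl(x^*(y)-f(y)\bigr)=x^*(x)-f(x)$, so $(x^*,-1)$ automatically \emph{supports} $(x,f(x))$ and the only real content of strong exposedness is the convergence of maximizing sequences. I would also note, purely from the definition, that an $x^*$-small diameter point is approached by \emph{every} admissible sequence: interleaving an arbitrary admissible $\{x_n\}$ with the constant sequence $x$ (which is trivially admissible) produces an admissible, hence norm-convergent, sequence, whose two interleaved parts must share a common limit; since the constant part converges to $x$, this forces $x_n\to x$. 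Thus ``norm convergent'' in the definition may be read as ``convergent to $x$,'' a fact I will use freely.

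For the implication ``strongly exposed $\Rightarrow$ small diameter point'' I would take $\{x_n\}\subset D$ with $x^*(x_n)-f(x_n)\to x^*(x)-f(x)$, set $t_n:=f(x_n)$ so that $(x_n,t_n)\in\epi(f)$ and $(x^*,-1)(x_n,t_n)=g(x_n)\to g(x)=\sup_{\epi(f)}(x^*,-1)$, and then invoke strong exposedness to conclude $(x_n,t_n)\to(x,f(x))$; reading off the first coordinate gives $x_n\to x$, so $\{x_n\}$ is norm convergent. This direction is immediate.

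The converse ``small diameter point $\Rightarrow$ strongly exposed'' is where the single substantive step lies. Given $\{(x_n,t_n)\}\subset\epi(f)$ with $(x^*,-1)(x_n,t_n)=x^*(x_n)-t_n\to\sup_{\epi(f)}(x^*,-1)=g(x)$, I first want to discard the ``vertical slack'' $t_n-f(x_n)\geq 0$. This is achieved by the squeeze
\[
x^*(x_n)-t_n\ \leq\ x^*(x_n)-f(x_n)=g(x_n)\ \leq\ g(x),
\]
whose outer terms both tend to $g(x)$; hence $g(x_n)\to g(x)$ and $t_n-f(x_n)\to 0$. Now the small diameter property applies to $\{x_n\}$, and by the interleaving remark above it yields $x_n\to x$ in norm. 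Finally $t_n=x^*(x_n)-\bigl(x^*(x_n)-t_n\bigr)\to x^*(x)-g(x)=f(x)$, so $(x_n,t_n)\to(x,f(x))$, which is exactly strong exposedness of $(x,f(x))$ by $(x^*,-1)$.

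The only genuine obstacle is the \emph{identification of the limit}: the definition of small diameter point guarantees only norm convergence, not convergence to $x$, and the second coordinate must be controlled separately because points of $\epi(f)$ may sit strictly above the graph. Both difficulties are cheap to resolve—the first by the interleaving argument, which notably does not require the limit to lie in $D$ and hence needs no closedness assumption on $D$, and the second by the squeeze that forces the vertical slack to vanish.
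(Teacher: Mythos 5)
Your proof is correct, and there is nothing in the paper to compare it against: the paper states Lemma~\ref{lemma:SDPperepi} with a citation to Collier \cite{C1975} and gives no proof at all, so your argument is a genuine self-contained addition rather than a variant of an in-paper proof. You handle exactly the two points where a careless argument would break. First, the paper's definition of an $x^*$-small diameter point demands only that admissible sequences be \emph{norm convergent}, not that they converge to $x$; your interleaving trick (splicing an admissible sequence with the constant sequence $x$, which is admissible since $x^*\in\partial f(x)$ makes $x$ a maximizer of $g(y)=x^*(y)-f(y)$) correctly upgrades this to convergence to $x$, and you rightly observe that no closedness of $D$ is needed because the constant subsequence pins down the limit. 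Second, for the converse you cannot simply project a maximizing sequence $\{(x_n,t_n)\}\subset\epi(f)$ to its first coordinate, since $t_n$ may exceed $f(x_n)$; your squeeze $x^*(x_n)-t_n\leq g(x_n)\leq g(x)$ both shows $\{x_n\}$ is admissible and forces the vertical slack $t_n-f(x_n)\to 0$, after which $t_n\to f(x)$ follows from $x_n\to x$ and continuity of $x^*$. The forward direction via $t_n:=f(x_n)$ is immediate, as you say. The only stylistic remark: the support computation $\sup_{\epi(f)}(x^*,-1)=x^*(x)-f(x)$ deserves the one-line justification you give (for $(y,t)\in\epi(f)$, $x^*(y)-t\leq x^*(y)-f(y)\leq x^*(x)-f(x)$), since it is used in both directions; with that spelled out, the proof is complete.
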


We deduce that if $f\colon X\to \R$ has the SDP, then for each $x\in X$, $(x,f(x))$ is supported by $(x^*,-1)$ if and only if $(x,f(x)) \in \epi(f)$ is strongly exposed by $(x^*,-1)$. We are going to show an elementary fact about the SDP;  we shall need this result  in the next section (cf. the proof of \cite[Theorem~4.2]{VolleZali}).  

\begin{proposition}\label{p: sumsdp}
Let $f,g\colon X\to \R$ be  convex  continuous functions. If $f$ has the SDP then $f+g$ has the SDP.
\end{proposition}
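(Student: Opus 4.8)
The plan is to verify the small diameter property for $f+g$ directly from the definition, reducing it to the SDP of $f$ by means of the subdifferential sum rule. Fix $x\in X$ and an arbitrary $w^*\in\partial(f+g)(x)$; I must show that $x$ is a $w^*$-small diameter point for $f+g$. Since $f$ and $g$ are convex and everywhere continuous, the Moreau--Rockafellar sum rule applies and yields $\partial(f+g)(x)=\partial f(x)+\partial g(x)$. Hence I may write $w^*=x^*+y^*$ with $x^*\in\partial f(x)$ and $y^*\in\partial g(x)$.

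Now let $\{x_n\}\subset X$ be any sequence with $w^*(x_n)-(f+g)(x_n)\to w^*(x)-(f+g)(x)$. Writing $a_n:=x^*(x_n)-f(x_n)$ and $b_n:=y^*(x_n)-g(x_n)$, the hypothesis says exactly that $a_n+b_n\to a+b$, where $a:=x^*(x)-f(x)$ and $b:=y^*(x)-g(x)$. The key observation is that the subgradient inequalities for $x^*\in\partial f(x)$ and $y^*\in\partial g(x)$ give $a_n\leq a$ and $b_n\leq b$ for every $n$; indeed $f(x_n)\geq f(x)+x^*(x_n-x)$ rearranges to $a_n\leq a$, and similarly for $b_n$.

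From $a_n-a\leq 0$, $b_n-b\leq 0$ and $(a_n-a)+(b_n-b)\to 0$ a squeeze argument forces $a_n\to a$ (and $b_n\to b$): each of the two nonpositive summands is bounded below by their sum, which tends to $0$. In particular $x^*(x_n)-f(x_n)\to x^*(x)-f(x)$. Since $f$ has the SDP and $x^*\in\partial f(x)$, the point $x$ is an $x^*$-small diameter point for $f$, so this last convergence forces $\{x_n\}$ to be norm convergent, which is precisely what the definition of $w^*$-small diameter point for $f+g$ requires. As $x$ and $w^*$ were arbitrary, $f+g$ has the SDP.

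The only non-elementary ingredient is the subdifferential sum rule used to split $w^*$; the rest is the elementary squeeze, so I expect the main (minor) obstacle to be merely justifying $\partial(f+g)(x)=\partial f(x)+\partial g(x)$, which is guaranteed here by the continuity of $f$ and $g$. Note that the role of $g$ is completely passive: no SDP-type regularity of $g$ is used, only that it contributes a nonpositive term $b_n-b$ that the squeeze absorbs.
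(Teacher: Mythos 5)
Your proof is correct and follows essentially the same route as the paper's: both split $w^*\in\partial(f+g)(x)$ via the subdifferential sum rule (the paper cites \cite[Theorem 3.23]{PhelpsBook}) and then use the subgradient inequalities to squeeze the $f$-term $x^*(x_n)-f(x_n)$ to its supremum, after which the SDP of $f$ yields norm convergence. Your formulation with $a_n\leq a$, $b_n\leq b$ is just a cleaner packaging of the paper's $\varepsilon$-chain of inequalities, so nothing further is needed.
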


\begin{proof}
Let $x_0\in X$ and $x^*\in \partial(f+g)(x_0)$. We are going to show that $x_0$ is a $x^*$-small diameter point for $f+g$. Indeed, since $f+g$ is a convex function and $x^*\in \partial(f+g)(x_0)$, we have
\begin{equation*}
x^*(y)-(f+g)(y)\leq x^*(x_0)-(f+g)(x_0),
\end{equation*}
for any $y\in X$. Now, let $\{x_n\}\subset X$ be a sequence such that $x^*(x_n)-(f+g)(x_n)$ converges to $x^*(x_0)-(f+g)(x_0)$. By \cite[Theorem 3.23]{PhelpsBook} there exists $x_1^*\in \partial f(x_0)$ and $x_2^*\in \partial g(x_0)$ such that $x^*=x_1^*+x_2^*$. Fix  $\epsilon>0$, then eventually  we have
\begin{equation*}
\begin{split}
\varepsilon&> x^*(x_0) - (f+g)(x_0) -x^*(x_n) + (f+g)(x_n)\\
&= x_1^*(x_0)- f(x_0) - x_1^*(x_n) + f(x_n) + x_2^*(x_0)- g(x_0) - x_2^*(x_n) + g(x_n)\\
&\geq x_1^*(x_0)- f(x_0) - x_1^*(x_n) + f(x_n) \geq 0.
\end{split}
\end{equation*}
Since $f$ has the SDP,  the previous chain of inequalities and the arbitrariness of $\epsilon>0$ show that $\{x_n\}$ converges to $x_0$. Hence, $f+g$ has the SDP.
\end{proof}

The next result shows, under suitable hypotheses, how to lift strongly exposed points from the norm of a normed space $X$ to the graph of a function $f\colon X\to \R$. It provides a good tool for constructing examples of functions having the SDP.

\begin{proposition}\label{t: liftingnorms}
Let $(X,\|\cdot\|)$ be a normed space, $x_0\in S_X$, and suppose that $x_0\in B_X$ is strongly exposed by $x^*\in S_{X^*}$. 
Let $h\colon [0,\infty)\to\R$ be a continuous convex strictly increasing function and let $\lambda\in\R^+$ and $\lambda^*\in\R$, suppose that $\lambda$ is a $\lambda^*$-small diameter point  for $h$. Then:
\begin{enumerate}
\item $h(\|\cdot\|)$ is a continuous convex  function on $X$;
\item $\lambda x_0$ is a $\lambda^* x^*$-small diameter point for $h(\|\cdot\|)$;
\item the origin of $X$ is a $0$-small diameter point for $h(\|\cdot\|)$, where $0\in X^*$ denotes the null functional.
\end{enumerate}  
\end{proposition}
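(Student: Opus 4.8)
Throughout the plan write $f:=h(\|\cdot\|)$. Part (1) is routine: the norm $\|\cdot\|\colon X\to[0,\infty)$ is continuous and convex and $h$ is continuous, so $f$ is continuous; and since $h$ is convex and nondecreasing, the chain $h(\|tx+(1-t)y\|)\leq h(t\|x\|+(1-t)\|y\|)\leq th(\|x\|)+(1-t)h(\|y\|)$ yields convexity of $f$. I record one observation used repeatedly below: since $h$ is convex, strictly increasing, and $\lambda>0$ lies in the interior of $[0,\infty)$, every $\lambda^*\in\partial h(\lambda)$ satisfies $\lambda^*>0$; indeed $\lambda^*=0$ would make $\lambda$ a global minimizer of $h$, contradicting strict monotonicity.

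For Part (2) I would first verify that $\lambda^*x^*\in\partial f(\lambda x_0)$. Using $x^*(x_0)=\|x^*\|=1$ and $\|\lambda x_0\|=\lambda$, the subgradient inequality to establish is $h(\|z\|)\geq h(\lambda)+\lambda^*(x^*(z)-\lambda)$ for all $z\in X$. This follows by composing two facts: the subgradient inequality for $h$ at $\lambda$, namely $h(\|z\|)\geq h(\lambda)+\lambda^*(\|z\|-\lambda)$ (valid since $\|z\|\geq0$), and the estimate $x^*(z)\leq\|z\|$, which with $\lambda^*>0$ gives $\lambda^*(x^*(z)-\lambda)\leq\lambda^*(\|z\|-\lambda)$. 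Hence $\lambda^*x^*\in\partial f(\lambda x_0)$, and the value of the associated affine functional at $\lambda x_0$ is $\lambda^*x^*(\lambda x_0)-f(\lambda x_0)=\lambda^*\lambda-h(\lambda)$.

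The heart of the argument is the small-diameter part of (2), which is the step I expect to be the main obstacle. Let $\{z_n\}\subset X$ satisfy $\lambda^*x^*(z_n)-f(z_n)\to\lambda^*\lambda-h(\lambda)$ and write $s_n:=\|z_n\|$. The key device is the sandwich $\lambda^*x^*(z_n)-h(s_n)\leq\lambda^* s_n-h(s_n)\leq\lambda^*\lambda-h(\lambda)$, where the first inequality uses $x^*(z_n)\leq s_n$ together with $\lambda^*>0$, and the second uses that $\lambda$ maximizes $s\mapsto\lambda^* s-h(s)$ on $[0,\infty)$ (equivalently $\lambda^*\in\partial h(\lambda)$). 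Since the leftmost term tends to the rightmost, both inequalities become asymptotic equalities, which \emph{separates} the data into a radial and a directional part. From the right equality I obtain $\lambda^* s_n-h(s_n)\to\lambda^*\lambda-h(\lambda)$, so the hypothesis that $\lambda$ is a $\lambda^*$-small diameter point for $h$ forces $\{s_n\}$ to converge; its limit is again a maximizer of $s\mapsto\lambda^* s-h(s)$, and interleaving with the constant sequence $\lambda$ shows the limit must be $\lambda$, so $\|z_n\|\to\lambda$. From the left equality I obtain $\lambda^*(s_n-x^*(z_n))\to0$, whence $x^*(z_n)\to\lambda$ because $\lambda^*>0$. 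For $n$ large (so $s_n\to\lambda>0$) set $w_n:=z_n/s_n\in S_X\subset B_X$; then $x^*(w_n)=x^*(z_n)/s_n\to1=\sup_{B_X}x^*$, and the strong exposedness of $x_0$ by $x^*$ gives $w_n\to x_0$, hence $z_n=s_n w_n\to\lambda x_0$. The difficulty is precisely arranging the sandwich so that the radial datum is absorbed by the SDP of $h$ and the directional datum by the strong exposedness of $x_0$; once it is in place, the rest is bookkeeping.

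Part (3) is the degenerate analogue. Since $h$ is nondecreasing, $f(z)=h(\|z\|)\geq h(0)=f(0)$ for every $z$, so $0\in\partial f(0)$ and the affine value is $-h(0)$. If $\{z_n\}$ satisfies $0\cdot x^*(z_n)-f(z_n)\to-h(0)$, that is $h(\|z_n\|)\to h(0)$, then strict monotonicity and continuity of $h$ (giving continuity of $h^{-1}$ at $h(0)$) yield $\|z_n\|\to0$, i.e.\ $z_n\to0$. Thus the origin is a $0$-small diameter point for $f$, which finishes the proof.
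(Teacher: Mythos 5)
Your proposal is correct and follows essentially the same route as the paper: the verification that $\lambda^*x^*\in\partial f(\lambda x_0)$ is the paper's inequality \eqref{eq: SE}, the sandwich $\lambda^*x^*(z_n)-h(\|z_n\|)\leq\lambda^*\|z_n\|-h(\|z_n\|)\leq\lambda^*\lambda-h(\lambda)$ is exactly how the paper separates the radial part (handled by the SDP of $h$) from the directional part (handled by strong exposedness of $x_0$), and part (3) is the same inverse-continuity argument. Your write-up is in fact slightly more careful than the paper's, since you make explicit the interleaving trick needed to identify the limit of $\|z_n\|$ as $\lambda$.
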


\begin{proof} The first point is well-known and easy to prove. Let us prove (ii). Clearly, since $h$ is strictly increasing, $\lambda^*$ is positive. Hence, for each $x\in X$, we have
\begin{equation}\label{eq: SE}
\lambda^* x^*(x)-h(\|x\|)\leq \lambda^* (x^*(x)-\|x\|)+\lambda^*\lambda-h(\lambda)\leq\lambda^* x^*(\lambda x_0)-h(\|\lambda x_0\|),
\end{equation}
where the first inequality holds since
$\lambda^*\in\partial h(\lambda)$. Now, let us consider a sequence $\{x_n\}\subset X$ and suppose that 
$$\lambda^* x^*(x_n)-h(\|x_n\|)\to\lambda^* x^*(\lambda x_0)-h(\|\lambda x_0\|).$$
Since, for $n\in\N$, 
$$\lambda^* x^*(x_n)-h(\|x_n\|)\leq \lambda^* \|x_n\|-h(\|x_n\|)\leq\lambda^* \lambda-h(\lambda),$$
we have that $\|x_n\|\to\lambda$, in particular $\|x_n\|>0$ eventually.
Moreover, by \eqref{eq: SE}, we have:
$\lambda^* (x^*(x_n)-\|x_n\|)\to 0$ and hence $x^*\left(\frac{x_n}{\|x_n\|}\right)\to 1$. Since $x_0\in B_X$ is strongly exposed by $x^*\in S_{X^*}$, we have $\frac{x_n}{\|x_n\|}\to x_0$, and hence ${x_n}\to \lambda x_0$. The proof of (ii) is completed.

In order to prove  (iii), we can suppose without any loss of generality that $h(0)=0$. Then, it  is sufficient to observe that, by the continuity of the inverse function of $h$, if $h(\|x_n\|)\to h(\|0\|)=0$ then $\|x_n\|\to 0$. 
\end{proof}

A function $f\colon X\to \R$ is said \textit{strictly convex} on a convex set $D\subset X$ if, for every $x,y\in D$, the following relation holds: $f(\frac{x+y}{2})<\frac{1}{2}(f(x)+f(y))$. We now recall the stronger notion of uniformly convex function. 

\begin{definition}
Let $D$ be a convex subset of a normed space $X$, $x_0\in D$ and $f\colon D\to \R$. Then we say that
\begin{itemize}
\item \textit{$f$ is uniformly convex at $x_0$} if for every $\varepsilon>0$ there exists $\delta>0$ such that for every $y\in D$ such that $\|x_0-y\|\geq\varepsilon$ it holds
\begin{equation*}
 f\left(\frac{x_0+y}{2}\right)\leq \frac{1}{2}f(x_0) + \frac{1}{2}f(y) - \delta.
\end{equation*} 
\item \textit{$f$ is uniformly convex on $D$} if for every $\varepsilon>0$ there exists $\delta>0$ such that for every $x,y\in D$ such that $\|x-y\|\geq\varepsilon$ it holds
\begin{equation*}
f\left(\frac{x+y}{2}\right)\leq \frac{1}{2}f(x) + \frac{1}{2}f(y) - \delta.
\end{equation*} 
\end{itemize}
\end{definition}

\begin{remark}
    The following implications hold.
    \begin{enumerate}
        \item If a function $f\colon X\to \mathbb{R}$ is uniformly convex on a convex subset $D\subset X$, then it is uniformly convex in $x$, for each $x\in D$.
        \item If $f\colon X\to \mathbb{R}$ is uniformly convex at $x$, for each $x\in X$, then $f$ has the SDP.
        \item If $f\colon X\to \R$ has the SDP then it is strictly convex.
        \end{enumerate}
\end{remark}
  \begin{proof}
      The  implication contained in (i) is trivial, let us prove (ii). Suppose on the contrary that $f$ has not the SDP, then there exist $x\in X$, $x^*\in \partial f(x)$, $\varepsilon>0$, and a sequence $\{x_n\}\subset X$ such that $x^*(x_n)-f(x_n)$ goes to $x^*(x)-f(x)$, when $n\to +\infty$,  but $\|x_n-x\|\geq \epsilon$, whenever $n\in\N$. Since $f$ is uniformly convex in $x$, there exists $\delta>0$ such that 
\begin{equation*}
 f\left(\frac{x+x_n}{2}\right)\leq \frac{1}{2}f(x) + \frac{1}{2}f(x_n) - \delta,
\end{equation*}
for each $n\in \N$. From which we get
\begin{equation*}
 2\delta + x^*(x) - f(x) + x^*(x_n) - f(x_n) \leq  2x^*\left(\frac{x+x_n}{2}\right)-2f\left(\frac{x+x_n}{2}\right).
\end{equation*}
Hence, for sufficiently large $n$ we get
\begin{equation*}
    \delta + 2[x^*(x) - f(x)] \leq 2\left[x^*\left(\frac{x+x_n}{2}\right)-f\left(\frac{x+x_n}{2}\right)\right].
\end{equation*}
The last inequality contradicts the fact that the point $(x,f(x))\in \epi(f)$ is supported by the functional $(x^*,-1)$.

Finally, let us prove (iii). If $f\colon X\to \R$ is not strictly convex, the its graph contains a segment of length $\ell>0$, therefore one can find a point $x\in X$ and $x^*\in X^*$ for which $\diam(S(y^*,\delta,\epi(f)))\geq \ell$, where $y^*=(x^*,-1)$. Therefore $f$ has not the SDP. 
  \end{proof} 

In general, none of the implications contained in the previous remark  can be reversed (this is easy to see for (i) and (ii), and an example of strictly convex function without the SDP is contained in Example~\ref{e: argmindiscontinuo} below). On the other hand, the next proposition, whose proof is left to the reader, shows that strict convexity and uniform convexity coincide on compact sets. 

\begin{proposition}\label{p: scimpliesuc}
Let $K$ be a compact and convex subset of a normed space $X$. Let $f\colon K\to \R$ be a continuous and strictly convex function. Then $f$ is uniformly convex on $K$.
\end{proposition}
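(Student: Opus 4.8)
The plan is to use a straightforward compactness argument built on the continuity of the \emph{convexity gap}. First I would introduce the function $g\colon K\times K\to\R$ defined by
\[
g(x,y)=\frac{1}{2}f(x)+\frac{1}{2}f(y)-f\left(\frac{x+y}{2}\right).
\]
Since $K$ is convex, the midpoint $\frac{x+y}{2}$ always belongs to $K$, so $g$ is well defined; moreover $g$ is continuous on $K\times K$, because $f$ is continuous and the maps $(x,y)\mapsto x$, $(x,y)\mapsto y$, and $(x,y)\mapsto\frac{x+y}{2}$ are continuous. The hypothesis of strict convexity says precisely that $g(x,y)>0$ whenever $x\neq y$.

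Next, fixing $\varepsilon>0$, I would consider the set
\[
C_\varepsilon=\{(x,y)\in K\times K\colon \|x-y\|\geq\varepsilon\}.
\]
If $C_\varepsilon=\emptyset$ (that is, $\diam K<\varepsilon$), the required inequality holds vacuously and any $\delta>0$ works, so one may assume $C_\varepsilon\neq\emptyset$. Being a closed subset of the compact set $K\times K$, the set $C_\varepsilon$ is itself compact. By continuity, $g$ attains its minimum over $C_\varepsilon$ at some point $(x_0,y_0)$; since $\|x_0-y_0\|\geq\varepsilon>0$ forces $x_0\neq y_0$, strict convexity yields
\[
\delta:=\min_{C_\varepsilon}g=g(x_0,y_0)>0.
\]

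Finally, I would conclude as follows: for every $x,y\in K$ with $\|x-y\|\geq\varepsilon$ we have $(x,y)\in C_\varepsilon$, hence $g(x,y)\geq\delta$, which is exactly the uniform convexity estimate
\[
f\left(\frac{x+y}{2}\right)\leq\frac{1}{2}f(x)+\frac{1}{2}f(y)-\delta.
\]
Since $\varepsilon>0$ was arbitrary, this shows that $f$ is uniformly convex on $K$. The only points requiring (minor) care are verifying that $g$ is genuinely continuous and strictly positive off the diagonal, and isolating the trivial case $\diam K<\varepsilon$ so that one always minimizes over a nonempty compact set. I expect no serious obstacle here: the entire force of the statement comes from the compactness of $K$, which is what upgrades the merely pointwise positivity of the gap into a uniform strictly positive lower bound.
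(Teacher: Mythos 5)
Your proof is correct and rests on essentially the same idea as the paper's: compactness is what upgrades the pointwise positivity of the convexity gap to a uniform lower bound. The paper's own argument is just the sequential, contrapositive form of yours --- supposing uniform convexity fails, it extracts convergent subsequences $x_1^n\to x_1$, $x_2^n\to x_2$ with $\|x_1-x_2\|\geq\varepsilon$ and contradicts strict convexity at the limit pair --- which is precisely your Weierstrass minimization of $g$ over $C_\varepsilon$, unpacked.
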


\subsection{Convex multiobjective optimization problem}

 Multiobjective optimization is a topic of optimization theory that finds many applications in Economics, Operations Research and Engineering. Here we just give a brief account on the notions of solution and on the linear scalarization procedure. For a complete overview about this topic, we refer the readers to the, now classical, monographs \cite{Luc} and \cite{Jahn}. Moreover, we quote also \cite{Ehrgott}, even if this book is devoted only to the finite dimensional setting. Even if the results we mention in this subsection can be proved in an easy way, their proofs can be found in the above mentioned books, often in a more general framework. 

Let $f_1,\dots, f_N$ be $N$ real valued functions defined on a normed linear space $X$.

\begin{definition}
    A point $x_0 \in X$ is a \textit{Pareto Optimum point} for $f_1,\dots, f_N$ when there is no $x \in X$ such that $f_k(x) \leq f_k(x_0)$ for each $k=1, \dots, N$ and $f_i(x)<f_i(x_0)$ for some $i\in \{1, \dots, N$\}. 
\end{definition}

A weaker notion of solution for a multiobjective optimization problem is the notion of weak Pareto Optimum point.

\begin{definition}
    A point $x_0 \in X$ is a \textit{weak Pareto Optimum point} for $f_1,\dots, f_N$ when there is no $x \in X$ such that $f_k(x) < f_k(x_0)$ for each $k=1, \dots, N$.
\end{definition}

Of course, each Pareto Optimum point is also a weak Pareto Optimum point, but in general the reverse implication does not hold.
Nevertheless, the two notions coincide under suitable convexity assumptions. Namely, the following result holds.
\begin{proposition}\label{prop:weakPareto=Pareto}
    Let $f_1,\dots,f_N$ be strictly convex functions then each weak Pareto Optimum Points for $f_1,\dots, f_N$ is also Pareto Optimum for $f_1,\dots, f_N$.
\end{proposition}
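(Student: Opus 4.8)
The plan is to prove the contrapositive. Suppose $x_0 \in X$ is \emph{not} a Pareto Optimum point for $f_1,\dots,f_N$; I will show that $x_0$ is not even a weak Pareto Optimum point. By the definition of Pareto optimality, there exists a point $x_1 \in X$ such that $f_k(x_1) \leq f_k(x_0)$ for every $k = 1,\dots,N$ and $f_i(x_1) < f_i(x_0)$ for at least one index $i$. The goal is to manufacture from $x_1$ a single point that strictly beats $x_0$ in \emph{all} coordinates simultaneously, which is exactly what is forbidden for a weak Pareto Optimum.

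The natural candidate is a midpoint, or more generally a convex combination, of $x_0$ and $x_1$. First I would consider the point $z = \frac{x_0 + x_1}{2}$ and evaluate each $f_k$ there. For any index $k$, convexity already gives $f_k(z) \leq \frac{1}{2} f_k(x_0) + \frac{1}{2} f_k(x_1) \leq \frac{1}{2} f_k(x_0) + \frac{1}{2} f_k(x_0) = f_k(x_0)$ whenever $f_k(x_1) \leq f_k(x_0)$, so at worst $z$ ties $x_0$ in every coordinate. The point of using strict convexity is to turn these weak inequalities into strict ones. For the coordinates $k$ where $f_k(x_1) < f_k(x_0)$ (in particular for $i$), the bound $f_k(z) \leq \frac{1}{2} f_k(x_0) + \frac{1}{2} f_k(x_1) < f_k(x_0)$ is already strict by the same computation. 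The delicate coordinates are those $k$ for which equality $f_k(x_1) = f_k(x_0)$ holds: here the convexity estimate only yields $f_k(z) \leq f_k(x_0)$, and I need strictness.

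The key observation is that $x_0 \neq x_1$, since they differ in coordinate $i$ (as $f_i(x_1) < f_i(x_0)$ forces $x_1 \neq x_0$). Therefore, for every index $k$, strict convexity applied to the distinct points $x_0$ and $x_1$ gives
\begin{equation*}
f_k(z) = f_k\!\left(\frac{x_0 + x_1}{2}\right) < \frac{1}{2} f_k(x_0) + \frac{1}{2} f_k(x_1) \leq \frac{1}{2} f_k(x_0) + \frac{1}{2} f_k(x_0) = f_k(x_0),
\end{equation*}
where the last inequality uses $f_k(x_1) \leq f_k(x_0)$. This holds for \emph{all} $k = 1,\dots,N$ at once, so $f_k(z) < f_k(x_0)$ for every $k$. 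Thus $z$ witnesses that $x_0$ is not a weak Pareto Optimum point, completing the contrapositive.

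The main (and really the only) subtlety is recognizing that strict convexity must be invoked uniformly across all coordinates, and that the hypothesis $x_0 \neq x_1$ is what legitimizes this for the tying coordinates; the indices where $x_1$ already strictly improves on $x_0$ are handled by plain convexity and cause no trouble. No obstacle of any technical depth arises — the argument is a direct one-line application of strict convexity once the midpoint is chosen — so I expect the write-up to be short.
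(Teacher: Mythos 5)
Your proof is correct and is exactly the standard midpoint argument: the paper itself omits the proof of this proposition (deferring to the monographs of Luc and Jahn), and your contrapositive via strict convexity at the midpoint of $x_0$ and $x_1$ is the expected one-line argument those references use. Your key observation that $f_i(x_1) < f_i(x_0)$ forces $x_1 \neq x_0$, which legitimizes the strict inequality $f_k\bigl(\frac{x_0+x_1}{2}\bigr) < \frac{1}{2}\bigl(f_k(x_0)+f_k(x_1)\bigr)$ in \emph{every} coordinate (matching the paper's midpoint definition of strict convexity), is precisely the point that needs to be made, and you made it.
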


A multiobjective optimization problem consists in finding  the (weak) Pareto Optimum points for objective functions $f_1,\dots, f_N$. It is easy to see that, in general, Pareto Optimum points are not unique. A common approach to solve a multiobjective optimization problem is to reduce it to a parametrized family of scalar optimization problems (this procedure is called scalarization of a multiobjective optimization problem). Many different scalarization methods are developed in the literature (see the monographs quoted above and the references therein). Among them, one of the most used is the linear scalarization or weighted sum scalarization.
Let us denote by $\Sigma_N$ the $(N-1)$-dimensional simplex, i.e., $\Sigma_N=\{t\in [0,1]^N:\sum_{i=1}^{N}t_i=1\}$ and, for $t=(t_1,\ldots,t_N)\in\Sigma_N$, let $u_t(x)=\sum_{i=1}^{N}t_if_i(x)$. We have the following sufficient optimality condition.

\begin{proposition}\label{prop:sufficient optimality}
    Let $x_0 \in X$. If there exists $\Tilde{t} \in \Sigma_N$ such that $u_{\Tilde{t}}(x_0)\leq u_{\Tilde{t}}(x)$ for every $x\in X$, then $x_0$ is a weak Pareto Optimum point for $f_1,\dots, f_N$.
\end{proposition}

The previous sufficient condition becomes also necessary if we add convexity assumptions for the objective functions.

\begin{proposition}\label{prop:necessary optimality}
    Let $x_0 \in X$ and $f_1,\dots,f_N$ be convex functions. If $x_0$ is a weak Pareto Optimum point for $f_1,\dots, f_N$, then there exists $\Tilde{t} \in \Sigma_N$ such that $u_{\Tilde{t}}(x_0)\leq u_{\Tilde{t}}(x)$ for every $x\in X$.
\end{proposition}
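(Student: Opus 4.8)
The plan is to reduce the statement to a finite-dimensional separation argument in $\R^N$, exploiting convexity of the $f_k$ to produce a convex set that the origin misses precisely because $x_0$ is weakly Pareto optimal. First I would pass to the image in objective space by considering the set
\[
A=\set{y\in\R^N}{\text{there is } x\in X \text{ with } f_k(x)-f_k(x_0)<y_k \text{ for every } k=1,\dots,N}.
\]
Three elementary properties of $A$ carry the whole argument. It is convex: given $y,y'\in A$ with witnesses $x,x'$ and $\lambda\in[0,1]$, convexity of each $f_k$ gives $f_k(\lambda x+(1-\lambda)x')-f_k(x_0)\leq \lambda(f_k(x)-f_k(x_0))+(1-\lambda)(f_k(x')-f_k(x_0))<\lambda y_k+(1-\lambda)y'_k$, so $\lambda y+(1-\lambda)y'\in A$ with the same convex combination as witness. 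It is open, since a witness $x$ for $y$ satisfies $f_k(x)-f_k(x_0)<y_k-\eta$ for some $\eta>0$ and all $k$, so any $y'$ close to $y$ still works. Finally $0\notin A$: otherwise some $x$ would satisfy $f_k(x)<f_k(x_0)$ for all $k$, contradicting weak Pareto optimality of $x_0$. Note also that $A$ is upward-closed, i.e.\ $y\in A$ and $y'\geq y$ coordinatewise force $y'\in A$.

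Next I would separate. Since $A$ is a nonempty convex subset of $\R^N$ not containing the origin, there is a nonzero $\tilde t=(\tilde t_1,\dots,\tilde t_N)\in\R^N$ with $\sum_{k}\tilde t_k\,y_k\geq 0$ for every $y\in A$. To see $\tilde t_k\geq 0$, fix any $y\in A$; by upward-closedness $y+se_k\in A$ for all $s>0$, so $\sum_k\tilde t_k y_k+s\,\tilde t_k\geq 0$ for all $s>0$, which forces $\tilde t_k\geq 0$. Hence $\tilde t\in\R^N_+\setminus\{0\}$, so $\sum_k\tilde t_k>0$ and I may rescale $\tilde t$ by $1/\sum_k\tilde t_k$ to arrange $\tilde t\in\Sigma_N$ without affecting the separation inequality.

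Finally I would recover the scalarized optimality. For an arbitrary $x\in X$ and any $\ep>0$, the point $y$ with coordinates $y_k=f_k(x)-f_k(x_0)+\ep$ lies in $A$ (witness $x$), so $\sum_k\tilde t_k\big(f_k(x)-f_k(x_0)+\ep\big)\geq 0$; using $\sum_k\tilde t_k=1$ this reads $u_{\tilde t}(x)-u_{\tilde t}(x_0)+\ep\geq 0$, and letting $\ep\to 0^+$ yields $u_{\tilde t}(x_0)\leq u_{\tilde t}(x)$. As $x$ was arbitrary, $\tilde t$ is the desired weight vector. The only genuinely delicate points are bookkeeping rather than conceptual: guaranteeing that the separating functional is nonzero and has nonnegative entries (handled by the upward-closedness of $A$ and the rescaling), and the passage from the strict inequalities defining $A$ to the nonstrict conclusion (handled by the $\ep$-perturbation and limit). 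In infinite-dimensional $X$ the separation is still performed in the finite-dimensional image space $\R^N$, so no difficulty arises from the dimension of $X$; this is where I expect the argument to be robust.
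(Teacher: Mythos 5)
Your proof is correct. The paper itself gives no proof of this proposition---it defers to the cited monographs (Luc, Jahn, Ehrgott)---and your argument, separating the origin from the open convex set of strict improvement vectors in the objective space $\R^N$ and then cleaning up the weights via upward-closedness, normalization, and the $\ep$-perturbation, is exactly the classical scalarization proof those references give, so you have taken essentially the standard route the paper relies on.
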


In the sequel of the paper we always consider objective functions $f_1,\dots,f_N$ that are  strictly convex. Therefore, by combining Propositions \ref{prop:weakPareto=Pareto}, \ref{prop:sufficient optimality} and \ref{prop:necessary optimality}, we obtain the following remark, that plays a key role in our setting.

\begin{remark} \label{rem:multiobjective strictly convex functions}
    Let us consider a multiobjective optimization problem with strictly convex objective functions $f_1, \dots, f_N$. Then a point $x_0 \in X$ is a Pareto Optimum point for $f_1, \dots, f_N$ if and only if $x_0$ is a minimum point for the function $u_{t}(x)$ for some $t \in \Sigma_N$. 
\end{remark}

\section{Continuity of $\mathrm{argmin}$ map for a convex combinations of functions}\label{sec: nonstronglyexposed}

 This section is devoted to find some conditions ensuring that the minimum point of a convex combination of continuous strictly convex function $f_1,\dots,f_N$ continuously depends on the coefficients of the convex combination. In order to provide existence of minima we restrict our attention to reflexive spaces.
 
\begin{definition}\label{def: Ufunct}
Let $X$ be a reflexive Banach space. We say that a  function $f\colon X\to \R$ is an \textit{U-function} if it satisfies the following conditions:
\begin{enumerate}[(a)]
\item $f$ is a continuous strictly convex function that is bounded on bounded sets.
\item $f$ is coercive (i.e. for each $C\in \R$ the set $\{x\in X\colon f(x)\leq C\}$ is bounded).
\end{enumerate}
\end{definition}

Let us notice that \textit{U}-functions have a unique minimum point. Indeed, let $f$ be an \textit{U}-function and take $C\in \R$ in such a way that the set  $S_C=\{x\in X\colon f(x)\leq C\}$ is non-empty. Since $f$ is continuous and convex, $S_C$ is closed and convex, therefore it is a weakly closed subset of $X$, hence $S_C$ is weakly compact. Since $f$ is a convex function, it is weakly lower semicontinuous, therefore it admits a minimum point in $S_C$. Uniqueness follows by strict convexity of $f$.

Let $f_1,\dots, f_N$ be a finite family of \textit{U}-functions. For 
\begin{equation*}
{t=(t_1,\dots, t_N)\in \Sigma_N,}
\end{equation*}
we recall that $u_t\colon X\to \R$ is the function defined by $u_t = \sum_{i=1}^N t_i f_i$. Since, for each $t\in\Sigma_N$, $u_t$ is an \textit{U}-function, we can consider the map $\phi\colon \Sigma_N\to X$ defined by 
\begin{equation*}
\phi(t):=\argmin(u_t).
\end{equation*}
 Let us stress the fact that the map $\phi$ implicitly depends on the choice of the functions $f_1,\dots,f_N$. 
 
\begin{remark}\label{r: minarebdd}
For a family of  \textit{U}-functions $f_1,\ldots, f_N$ as above, consider the set $\mathcal{M}=\phi(\Sigma_N)$ containing all  minimun points of the functions $u_t$ ($t\in \Sigma_N$). Proceeding as in  \cite[p. 28]{BE},  it is not difficult to see that the set $\mathcal{M}$ is bounded. 
\end{remark}
\begin{remark}[{\bf Multiobjective optimization with \textit{U}- functions}] \label{rem: multiobjective U functions} 
When we consider a convex multiobjective optimization problems where the objective functions are \textit{U}-functions, by Remark \ref{rem:multiobjective strictly convex functions} it follows that the set  $\mathcal{M}=\phi(\Sigma_N)$ is the set of all Pareto Optimum points for the functions $f_1,\dots,f_N$. 
\end{remark}

 If $X$ is finite-dimensional, for any choice of  \textit{U}-functions $f_1,\ldots,f_N$,  it is possible to show, by a standard compactness argument, that the function $\phi\colon \Sigma_N \to X$ is continuous (here, $\Sigma_N$ is endowed with the standard product topology). Let us point out that, in the infinite-dimensional case, this is not true in general:  Example~\ref{e: argmindiscontinuo} below  contains two  \textit{U}-functions $f_1,f_2$, for which the corresponding function $\phi$ is not continuous. 
  The main aim of the present section is to provide suitable geometric assumptions, that ensure the continuity of the function $\phi$ in the infinite-dimensional setting (see Theorem~\ref{t: continuityargmin}, below).

We start by proving the continuity of the map $\phi$ at a given point of $\Sigma_N$. In the sequel, for a positive real number $\alpha$ and a \textit{U}-function $f$, we denote by $S_{\alpha}(f)$ the strict sublevel set of $f$ at level $(\min f +\alpha)$, i.e., $$S_{\alpha}(f)=\{x\in X\colon f(x)<\min (f)+\alpha\}.$$ 

\begin{lemma}\label{l: pointwisecont}
Let $f_1,\dots, f_N$ be a finite family of \textit{U}-functions and  $\hat{t}\in \Sigma_N$. Suppose that $\diam(S_\alpha(u_{\hat{t}}))$ goes to zero for $\alpha$ going to zero. Then the map $\phi\colon \Sigma_N\to X$ is continuous at $\hat{t}$.
\end{lemma}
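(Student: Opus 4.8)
The plan is to verify sequential continuity: since $\Sigma_N$ is metrizable, it suffices to show that whenever $t^{(k)}\to\hat t$ in $\Sigma_N$ one has $\phi(t^{(k)})\to\phi(\hat t)$ in norm. Write $x_0=\phi(\hat t)$ and $x_k=\phi(t^{(k)})$, so that $x_0$ is the minimum point of $u_{\hat t}$ and each $x_k$ minimizes $u_{t^{(k)}}$. The whole argument will rest on recognizing that the hypothesis $\diam(S_\alpha(u_{\hat t}))\to 0$ says precisely that every minimizing sequence for $u_{\hat t}$ converges to $x_0$; thus the goal reduces to proving that $\{x_k\}$ is a minimizing sequence for $u_{\hat t}$, i.e. $u_{\hat t}(x_k)\to\min u_{\hat t}$.

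First I would pin down the uniform bounds. By Remark \ref{r: minarebdd} the set $\mathcal{M}=\phi(\Sigma_N)$ is bounded, so all the points $x_k$ and $x_0$ lie in a common bounded set. Since each $f_i$ is bounded on bounded sets (property (a) of \textit{U}-functions), there is $M>0$ with $|f_i(x_k)|\le M$ and $|f_i(x_0)|\le M$ for all $i$ and all $k$. Writing $u_t=\sum_i t_i f_i$, this yields the perturbation estimates $|u_{\hat t}(x_k)-u_{t^{(k)}}(x_k)|\le M\sum_i|\hat t_i-t^{(k)}_i|$, together with the analogous bound at $x_0$; both right-hand sides tend to $0$ because $t^{(k)}\to\hat t$.

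Next I would chain these estimates with the minimizing property of $x_k$. Using $u_{t^{(k)}}(x_k)\le u_{t^{(k)}}(x_0)$ and inserting the two perturbation bounds gives
\begin{equation*}
u_{\hat t}(x_k)\le u_{\hat t}(x_0)+\bigl[u_{t^{(k)}}(x_0)-u_{\hat t}(x_0)\bigr]+\bigl[u_{\hat t}(x_k)-u_{t^{(k)}}(x_k)\bigr],
\end{equation*}
and letting $k\to\infty$ yields $\limsup_k u_{\hat t}(x_k)\le u_{\hat t}(x_0)=\min u_{\hat t}$. Since trivially $u_{\hat t}(x_k)\ge\min u_{\hat t}$, we conclude $u_{\hat t}(x_k)\to\min u_{\hat t}$, so $\{x_k\}$ is indeed a minimizing sequence for $u_{\hat t}$.

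Finally I would invoke the diameter hypothesis to close the argument: given $\varepsilon>0$, choose $\alpha>0$ with $\diam(S_\alpha(u_{\hat t}))<\varepsilon$. For $k$ large enough $u_{\hat t}(x_k)<\min u_{\hat t}+\alpha$, hence $x_k\in S_\alpha(u_{\hat t})$; as also $x_0\in S_\alpha(u_{\hat t})$, we obtain $\|x_k-x_0\|<\varepsilon$ eventually, i.e. $x_k\to x_0$. I expect the only genuinely delicate point to be the uniform bound $M$: it is exactly here that the boundedness of $\mathcal{M}$ from Remark \ref{r: minarebdd} and the bounded-on-bounded-sets assumption (a) are both needed, since without trapping the $x_k$ in a fixed bounded set one could not control the perturbation terms. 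Once that is secured, the remainder is a soft limiting argument.
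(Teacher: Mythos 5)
Your proof is correct and rests on exactly the same ingredients as the paper's: boundedness of $\mathcal{M}=\phi(\Sigma_N)$ from Remark~\ref{r: minarebdd}, the resulting perturbation estimate $|u_{\hat t}(x)-u_{t^{(k)}}(x)|\le M\sum_i|\hat t_i-t^{(k)}_i|$ on bounded sets, the minimality of $\phi(t^{(k)})$ for $u_{t^{(k)}}$, and the small-diameter hypothesis to convert closeness of values into closeness in norm. The only difference is organizational --- the paper argues by contradiction, while you run the same estimates forward by showing that $\{\phi(t^{(k)})\}$ is a minimizing sequence for $u_{\hat t}$ --- so this is essentially the paper's argument.
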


\begin{proof} Let us denote  $x_0=\phi(\hat{t})=\argmin(u_{\hat{t}})$.
We claim that if a sequence $\{v^n\}\subset \Sigma_N$ converges to $\hat{t}$,  then the sequence of functions $\{u_{v^n}\}$ converges uniformly to $u_{\hat{t}}$ on  bounded sets. Indeed, let $D\subset X$ be a bounded set and $x\in D$, denote $\hat{t}=(\hat{t}_1,\ldots,\hat{t}_N)$ and $v^n=(v^n_1,\ldots,v^n_N)$, then 
\begin{eqnarray*}
|u_{\hat{t}}(x)-u_{v^n}(x)|&=&\left|\sum_{i=1}^N \hat{t}_i f_i(x)-\sum_{i=1}^N v_i^n f_i(x)\right|\\
&\leq& \max_{i}\sup_{y\in D}|f_i(y)| \sum_{j=1}^N|\hat{t}_j-v^n_j|.
\end{eqnarray*}
Taking the supremum over $x\in D$ and letting $n$ to infinity we get the claim. 

Now suppose that the function $\varphi\colon\Sigma_N\to X$ is not continuous at $\hat{t}$. Then there exist a sequence $\{t_n\}\subset\Sigma_N$, converging to $\hat{t}$, and a positive real number $\xi>0$ such that $\|x_n-x_0\|>\xi$, where $x_n:=\phi(t_n)$ for every $n\in\N$. Since $\diam(S_{\alpha}(u_{\hat{t}}))$ goes to zero, there exists $\alpha>0$ such that $\diam(S_{\alpha}(u_{\hat{t}}))<\xi/2$. By Remark \ref{r: minarebdd}, the set $\mathcal{M}$ is bounded. Since $\{u_{t_n}\}$ converges uniformly to $u_{\hat{t}}$ on $\mathcal{M}$, there exists $n\in\N$ such that $|u_{t_n}(x_0)-u_{\hat{t}}(x_0)|<\alpha/3$ and $|u_{t_n}(x_n)-u_{\hat{t}}(x_n)|<\alpha/3$. Since $x_0\in S_{\alpha}(u_{\hat{t}})$ and $\|x_n-x_0\|>\xi$, we have $u_{\hat{t}}(x_n)\geq\alpha + u_{\hat{t}}(x_0)$. Hence, we obtain
\begin{equation*}
u_{t_n}(x_n)=u_{t_n}(x_n)-u_{\hat{t}}(x_n)+u_{\hat{t}}(x_n)>2\alpha/3 + u_{\hat{t}}(x_0)>\alpha/3+ u_{\hat{t}}(x_0)>u_{t_n}(x_0).
\end{equation*}
A contradiction to the fact that $x_n=\phi(t_n)=\argmin(u_{t_n})$.
\end{proof}

\begin{theorem}\label{t: continuityargmin}
Let $f_1,\dots,f_N$ be a finite family of \textit{U}-functions. The following assertions hold
\begin{enumerate}
    \item If $f_i$ has the SDP for some $i\in\{1,\dots, N\}$, then the map $\varphi\colon \Sigma_N\to X$ is continuous on $\{t=(t_1,\ldots,t_n)\in \Sigma_N\colon t_i\neq 0\}$.
    \item If $f_i$ has the SDP for every $i=1,\dots, N$, then the map $\varphi\colon \Sigma_N\to X$ is continuous on $\Sigma_N$.
    \end{enumerate} 
\end{theorem}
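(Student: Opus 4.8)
The plan is to reduce everything to the pointwise continuity criterion of Lemma~\ref{l: pointwisecont}, so that for a fixed $\hat t$ in the relevant set it suffices to verify that $\diam(S_\alpha(u_{\hat t}))\to 0$ as $\alpha\to 0$. My strategy for this diameter condition is to show that the scalarized function $u_{\hat t}$ itself has the SDP, and then to read off the diameter condition at its minimum point.

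First I would record that positive scaling preserves the SDP: if $f_i$ has the SDP and $\lambda>0$, then $\lambda f_i$ has the SDP, since $\partial(\lambda f_i)(x)=\lambda\,\partial f_i(x)$, and dividing the hypothesis $x^*(x_n)-\lambda f_i(x_n)\to x^*(x)-\lambda f_i(x)$ by $\lambda$ turns a small-diameter statement for $\lambda f_i$ into one for $f_i$. Then, fixing $\hat t=(\hat t_1,\dots,\hat t_N)$ with $\hat t_i\neq 0$, I would write $u_{\hat t}=\hat t_i f_i+\sum_{j\neq i}\hat t_j f_j$, observe that $\hat t_i f_i$ has the SDP and that $\sum_{j\neq i}\hat t_j f_j$ is convex and continuous (a nonnegative combination of $U$-functions), and invoke Proposition~\ref{p: sumsdp} to conclude that $u_{\hat t}$ has the SDP.

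Next I would translate the SDP of $u_{\hat t}$ into the hypothesis of Lemma~\ref{l: pointwisecont}. Let $x_0=\phi(\hat t)=\argmin(u_{\hat t})$; since $x_0$ minimizes $u_{\hat t}$, one has $0\in\partial u_{\hat t}(x_0)$, and the SDP makes $x_0$ a $0$-small diameter point, which says precisely that every minimizing sequence for $u_{\hat t}$ is norm convergent (necessarily to $x_0$). From this I would deduce $\diam(S_\alpha(u_{\hat t}))\to 0$ by contradiction: if some $\epsilon>0$ survived, one could choose $y_n,z_n\in S_{1/n}(u_{\hat t})$ with $\|y_n-z_n\|\geq\epsilon$; but both $\{y_n\}$ and $\{z_n\}$ are minimizing sequences, hence both converge to $x_0$, contradicting $\|y_n-z_n\|\geq\epsilon$. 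Lemma~\ref{l: pointwisecont} then gives continuity of $\phi$ at $\hat t$, which proves~(i).

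Finally, part~(ii) follows immediately: for any $\hat t\in\Sigma_N$ the constraint $\sum_i\hat t_i=1$ forces some coordinate $\hat t_i>0$, and since that $f_i$ has the SDP, part~(i) yields continuity of $\phi$ at $\hat t$; as $\hat t$ is arbitrary, $\phi$ is continuous on all of $\Sigma_N$. The only mildly delicate points are the scaling observation and the equivalence between ``minimizing sequences converge'' and ``$\diam(S_\alpha)\to 0$'', both elementary; I do not expect a genuine obstacle, since the substantive content is already packaged in Proposition~\ref{p: sumsdp} and Lemma~\ref{l: pointwisecont}.
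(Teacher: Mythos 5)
Your proof is correct and follows essentially the same route as the paper: decompose $u_{\hat t}=\hat t_i f_i+\sum_{j\neq i}\hat t_j f_j$, use Proposition~\ref{p: sumsdp} (plus the trivial positive-scaling invariance of the SDP) to conclude that $u_{\hat t}$ has the SDP, verify that $\diam(S_\alpha(u_{\hat t}))\to 0$, and finish with Lemma~\ref{l: pointwisecont}. The only cosmetic difference is that the paper obtains the diameter condition by citing Lemma~\ref{lemma:SDPperepi} (the point $(x_{\hat t},u_{\hat t}(x_{\hat t}))$ is strongly exposed in $\epi(u_{\hat t})$ by $(0,-1)$), whereas you verify it directly through the elementary minimizing-sequence argument, which is equally valid.
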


\begin{proof}
 Let us prove (i),  (ii)  follows immediately. Suppose that $t=(t_1,\ldots,t_n)\in \Sigma_N$ is such that $t_i\neq 0$, and let us prove that $\phi$ is continuous at $t$. By Proposition \ref{p: sumsdp}, the function $u_t$ is a \textit{U}-function which satisfies the SDP. If $x_t=\phi(t)$, by Lemma~\ref{lemma:SDPperepi}, we have that $(x_t,u_t(x_t))$ is strongly exposed by $(0,-1)\in X^*\times\R$. Hence, $\diam(S_\alpha(u_{t}))$ goes to zero for $\alpha$ going to zero. The assertion follows by Lemma \ref{l: pointwisecont}.
\end{proof}

\begin{remark}
Let us point out that if a continuous convex function $f \colon X\to\R$ has the SDP then, by \cite[Proposition~3.13]{VolleZali}, it is coercive. Therefore, under this additional assumption, condition (b) in Definition~\ref{def: Ufunct} is redundant.    
\end{remark}

The following example shows that the map $\phi\colon \Sigma_N\to X$ is not always continuous, even if  $f_1,\dots,f_N$ are \textit{U}-functions. Therefore, this example points out the key role played in Theorem \ref{t: continuityargmin} by SDP. 

\begin{example}\label{e: argmindiscontinuo}
Let $X=\ell_2$, $h_1,h_2\colon X\to \R$, and $g\colon \R\to \R$ be defined by
\begin{equation*}
 h_1(x)=\sum_{n=2}^{+\infty}\frac{(x_n-x_1)^2}{2^n}, ,\qquad\qquad x=(x_1,x_2,x_3,\dots)\in\ell_2,
\end{equation*}
\begin{equation*}
  h_2(x)=\sum_{n=2}^{+\infty}\frac{(x_n+x_1-1)^2}{2^n},\qquad\qquad x=(x_1,x_2,x_3,\dots)\in\ell_2,
\end{equation*}

\begin{equation*}
g(y)= \begin{cases}
(y-2)^2\,\, & \text{if } y\geq 2,\\
0 \,\, & \text{if } 0\leq y<2.
\end{cases}
\end{equation*}
Then define 
\begin{equation*}
 f_1(x)=h_1(x) + g(\|x\|),\quad  f_2(x)=h_2(x) + g(\|x\|)\qquad\qquad (x\in\ell_2).
\end{equation*}
Then $f_1$ and $f_2$ are   real-valued \textit{U}-functions  on $\ell_2$ such that:
\begin{enumerate}[(1)]
\item 
 $f_1$ attains its minimum at $x=0$;
 \item for each $t\in (0,1)$, the  function $u_t:=(1-t)f_1+t f_2$ attains its minimum outside $2U_X$ (i.e. $\|\phi(t)\|\geq2$).
 \end{enumerate}
 In particular, the map $\phi\colon \Sigma _2\to X$ is not continuous at $(t_1,t_2)=(1,0)\in \Sigma_2$.\end{example}

\begin{proof}
    Let us first prove that $f_1$ and $f_2$ are \textit{U}-functions. Since, for each $x=(x_1,x_2,x_3,\dots)\in \ell_2$, we have $$h_2\left((x_1,x_2,x_3,\dots)\right)=h_1\left((1-x_1,x_2,x_3,\dots)\right),$$
    it is sufficient to prove it just for $f_1$. 
    
    Continuity of  $f_1$  is trivial, while condition (b), in Definition~\ref{def: Ufunct}, follows by the definition of the function $g$. 
    We claim that $h_1$ is bounded on bounded sets. Indeed, if $D$ is a bounded set and $x\in D$, we get
\begin{equation*}
|h_1(x)|=\left|\sum_{n=2}^{+\infty}\frac{(x_n-x_1)^2}{2^n}\right|\leq \sup_{n\in\N}|x_n-x_1|^2 \sum_{n=2}^{+\infty}\frac{1}{2^n},
\end{equation*}
    and the claim is proved. By our claim and by our construction, $f_1$ is bounded on bounded sets.  It remains to show that $f_1$ is strictly convex. Since $g(\|\cdot\|)$ is a convex function, it is enough to show that $h_1$ is strictly convex. 
    Let $x,y\in \ell_2$ be distinct,  $\lambda\in (0,1)$, and let us prove that 
 \begin{equation*}
 \theta_1 := h_1(\lambda x +(1-\lambda )y) - [\lambda h_1(x) + (1-\lambda)h_1(y)] < 0.
 \end{equation*}
    To do this, let us consider the function $k_1\colon\ell_2\times \ell_2 \to \R$ defined by
\begin{equation*}
 k_1(x,y)=\sum_{n=2}^{+\infty} \frac{(x_n-x_1)(y_n-y_1)}{2^n},\quad x=(x_1,x_2,\dots),y=(y_1,y_2,\dots)\in \ell_2, 
\end{equation*}
and observe that
\begin{enumerate}
\item $h_1(x)>0$ if and only if $x\neq0$;
\item $h_1(x\pm y)=h_1(x)+ h_1(y)\pm 2k_1(x,y)$.
\end{enumerate}
 Hence, by using (i) and (ii), we get
\begin{equation*}
\begin{split}
\theta_1 &= \sum_{n=2}^{\infty}\frac{(\lambda x_n - \lambda x_1 +(1-\lambda)y_n - (1-\lambda)y_1)^2}{2^n} -\lambda h_1(x) -(1-\lambda) h_1(y)\\
&= \lambda^2 h_1(x) +(1-\lambda)^2 h_1(y) + 2\lambda (1-\lambda) k_1(x,y) -\lambda h_1(x) - (1-\lambda) h_1(y)\\
&= \lambda (\lambda -1) h_1(x) - (1-\lambda) (\lambda) h_1(y) + 2\lambda (1-\lambda)k_1(x,y)\\
&= -\lambda(1-\lambda) h_1(x-y)<0.
\end{split}
\end{equation*}
This concludes the proof of the fact that $f_1,f_2$ are \textit{U}-functions.

Now, condition (1) trivially holds, let us prove (2). Fix $t\in (0,1)$, we claim  that function $\tilde{u}_t=(1-t)h_1+ t h_2$ admits no minimum points. It is easy to see, by standard computation, that the functional $x^*_t \in \ell_2$ defined as
\begin{equation*}
 \left(\sum_{n=2}^{+\infty}\frac{(x_1-x_n)(1-t) + (x_n+x_1-1)t}{2^{n-1}},\dots,\frac{(1-t)(x_n-x_1)+t(x_n+x_1-1)}{2^{n-1}},\dots\right)
\end{equation*}
is the derivative of the function $\tilde{u}_t$ at the point $x=(x_1,x_2,\dots)$. By \cite[Proposition 1.26]{PhelpsBook}, $\tilde{u}_t$ has a minimum at $x$ if and only if $x_t^*=0$. Assume, on the contrary, that $x_t^*=0$, then for every $n\geq 2$ the following condition must be satisfied
\begin{equation*}
 \frac{(1-t)(x_n-x_1) + t(x_n+x_1-1)}{2^{n-1}}=0.
\end{equation*}
Which yields  $x_n=x_1-2tx_1+t$. Substituting it in the first coordinate we get
\begin{equation*}
 \sum_{n=2}^{+\infty}\frac{(1-t)(x_1 -x_1 +2tx_1-t) + t(x_1-2tx_1+t+x_1-1)}{2^{n-1}},
\end{equation*}
which is null if and only if each term of the series is equal to zero. Whence, it follows that
\begin{equation*}
\begin{split}
&(1-t)(2tx_1-t) + (2x_1 -2tx_1 +t -1)=0\\
&4tx_1-4t^2x_1+2t^2-2t=0\\
& x_1=\frac{1}{2}.
\end{split}
\end{equation*}
Which implies $x_n=\frac{1}{2}$ for every $n\in \mathbb{N}$. Since $x\in\ell_2$, we get a contradiction, and the claim is proved.

Now, suppose on the contrary that  $\|\phi(t)\|<2$. Then, since $u_t|_{2U_X}=\tilde{u}_t|_{2U_X}$, we get that $\tilde{u}_t$ attains its minimum at $\phi(t)$, a contradiction by our claim. The proof is concluded.
\end{proof}

We conclude this section by pointing out that, by means of Theorem \ref{t: continuityargmin}, we can obtain some information about the topological properties of the set of Pareto Optimum points for a multiobjective optimization problem with $f_1,\dots,f_N$  as objective functions. This type of results have been widely studied in the field of multiobjective optimization (see, e.g.,  \cite[Ch.6]{Luc} and  \cite[Sec. 3.4]{Ehrgott}). Here we just quote a corollary of our result, that immediately follows by Remark~\ref{rem: multiobjective U functions} and Theorem~\ref{t: continuityargmin}

\begin{corollary}
Let $f_1,\dots,f_N$ be a finite family of \textit{U}-functions. If $f_i$ has the SDP for every $i=1,\dots, N$, then the set of Pareto Optimum points for $f_1,\dots,f_N$ is a compact and connected subset of $X$.
    \end{corollary}

\section{Approximation of the solutions for a convex multiobjective optimization problem}\label{sec: generalizzazioneEB}
This section is devoted to develop an approximation result for the solutions of a convex multiobjective optimization problem in the spirit of the paper \cite{BE}. We start by recalling the main theorem proved in \cite{BE}.

\begin{theorem}\label{teo:EB}
Let $X$ be a uniformly convex Banach space. Let $f_1,\dots,f_N$ be a finite family of \textit{U}-functions which additionally satisfy the following conditions:
\begin{enumerate}[(a)]
\item for each $i\in\{1,\dots,N\}$, $f_i$ is G\^{a}teaux differentiable at each point of $X$.
\item for each convex bounded set $D\subset X$, there exists a constant $C_D$ and $q\geq2$ such that, for each $x_1, x_2\in D$ 
\[
 f_i\left(\frac{x_1+x_2}{2}\right)\leq \frac{1}{2} (f_i(x_1)+f_i(x_2)) - C_D \|x_1-x_2\|^q,
\]
for every $i\in \{1,\dots, N\}$.
\end{enumerate}
Then, there exist $K,\gamma>0$ such that if $t\in \Sigma_N$,  $x_t=\phi(t)$,  and $\{F_k\}$ is a sequence of independent random variables defined on the same probability space $(\Omega,\mu)$ which take the values $f_1,\dots, f_N$ with probability $t_1,\dots, t_N$, we have, for each $n\in\N$
\begin{equation}\label{eq: BE}
 \int_{\Omega}\left\|\argmin\left(\frac{1}{n}\sum_{k=1}^n F_k(\omega)\right)-x_t\right\|d\mu\leq \frac{K}{n^{\gamma}}.   
\end{equation}
\end{theorem}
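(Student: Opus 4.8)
The plan is to reduce the random, infinite-dimensional problem to a purely deterministic stability estimate for $\phi$, combined with an elementary variance computation on the simplex. The crucial observation is that the empirical average $U_n(\omega):=\frac{1}{n}\sum_{k=1}^n F_k(\omega)$ is itself a convex combination of $f_1,\dots,f_N$: if $n_i(\omega)$ denotes the number of indices $k\leq n$ with $F_k(\omega)=f_i$ and $s(\omega):=\bigl(\tfrac{n_1(\omega)}{n},\dots,\tfrac{n_N(\omega)}{n}\bigr)\in\Sigma_N$, then $U_n(\omega)=u_{s(\omega)}$ and hence $\argmin\bigl(U_n(\omega)\bigr)=\phi(s(\omega))$. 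In particular every minimizer produced by the randomization lies in the bounded set $\M=\phi(\Sigma_N)$ (Remark~\ref{r: minarebdd}). I would fix once and for all a bounded closed convex set $D$ containing $\M$; then the constant $C_D$ and the exponent $q$ of assumption~(b), together with the (local) Lipschitz constants of the $f_i$ on $D$, become fixed numbers depending only on $f_1,\dots,f_N$, which is what will ultimately deliver uniformity in $t$.

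First I would prove a deterministic estimate: there is a constant $L>0$ with $\norm{\phi(t)-\phi(s)}\leq L\,\bigl(\sum_{i=1}^N|t_i-s_i|\bigr)^{1/(q-1)}$ for all $t,s\in\Sigma_N$. The point is that both $u_t$ and $u_s$, being convex combinations of the $f_i$, inherit the uniform convexity in~(b) on $D$ with the same $C_D$ and $q$. Writing $x_t=\phi(t)$, $x_s=\phi(s)$ and applying~(b) to $u_t$ with the pair $(x_t,x_s)$, and using that $x_t$ minimizes $u_t$, one obtains $u_t(x_s)-u_t(x_t)\geq 2C_D\norm{x_t-x_s}^q$, and symmetrically $u_s(x_t)-u_s(x_s)\geq 2C_D\norm{x_t-x_s}^q$. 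Adding these and setting $h:=u_t-u_s=\sum_i(t_i-s_i)f_i$ gives
\[
h(x_s)-h(x_t)\geq 4C_D\norm{x_t-x_s}^q .
\]
Since each $f_i$ is convex and bounded on bounded sets, it is Lipschitz on $D$, so $h$ is Lipschitz on $D$ with constant at most $\bigl(\max_i\Lip(f_i|_D)\bigr)\sum_i|t_i-s_i|$; bounding the left-hand side by $\Lip(h|_D)\,\norm{x_t-x_s}$ and dividing by $\norm{x_t-x_s}$ (the case $x_t=x_s$ being trivial) yields the claimed H\"older estimate with exponent $1/(q-1)$.

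The remaining step is a routine probabilistic estimate. Applying the deterministic bound with $s=s(\omega)$ and integrating gives
\[
\int_\Omega\norm{\phi(s(\omega))-\phi(t)}\,d\mu\leq L\int_\Omega\Bigl(\sum_{i=1}^N|s_i(\omega)-t_i|\Bigr)^{1/(q-1)}d\mu .
\]
Because $q\geq 2$, the exponent $1/(q-1)$ lies in $(0,1]$, so $u\mapsto u^{1/(q-1)}$ is concave and Jensen's inequality bounds the right-hand side by $L\bigl(\int_\Omega\sum_i|s_i(\omega)-t_i|\,d\mu\bigr)^{1/(q-1)}$. Finally each $n_i(\omega)$ is $\mathrm{Binomial}(n,t_i)$, so $\int_\Omega|s_i(\omega)-t_i|\,d\mu\leq\sqrt{t_i(1-t_i)/n}\leq 1/(2\sqrt n)$, and summing over $i$ the bracket is at most $N/(2\sqrt n)$. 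This produces the bound $K/n^{\gamma}$ with $\gamma=\tfrac{1}{2(q-1)}$ and $K$ depending only on $L$, $N$ and $q$, uniformly in $t\in\Sigma_N$.

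The step I expect to be the main obstacle is securing uniformity of the constants in $t$, and this is precisely what the identification $U_n(\omega)=u_{s(\omega)}$ buys us: it confines every minimizer $\phi(t)$ and $\phi(s(\omega))$ to the single bounded set $\M$, so that $C_D$ and $\Lip(f_i|_D)$ can be chosen independently of $t$ and $\omega$. It is worth noting that this route uses only the uniform convexity supplied by~(b) and the local Lipschitz property of the $f_i$; assumption~(a) and the uniform convexity of $X$ do not seem to be needed for the argument, beyond ensuring that each $u_t$ is an \textit{U}-function and hence has a unique minimizer.
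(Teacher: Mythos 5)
Your proof is correct, and it takes a genuinely different route from the one the paper follows (Theorem~\ref{teo:EB} is quoted from \cite{BE}, and the method behind it is the one reproduced in the paper's proof of Theorem~\ref{t: mainApplication}). That method is gradient-based: to each $F_k$ one attaches the mean-zero $X^*$-valued random variable $G_k^t(\omega)=f_i'(x_t)$ when $F_k(\omega)=f_i$, controls $\int_\Omega\|\frac{1}{n}\sum_{k}G_k^t\|\,d\mu\leq Cn^{\frac1p-1}$ via Lemma~\ref{l: BELemma} --- this is precisely where the uniform convexity of $X$ enters, through the type $p$ of $X^*$ --- and then inverts the modulus of convexity along the segment from the empirical minimizer to $x_t$ (Lemma~\ref{l: BELemma2} and the map $\psi$). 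You bypass the vector-valued probability entirely: the identification $\frac1n\sum_{k\leq n}F_k(\omega)=u_{s(\omega)}$ with $s(\omega)\in\Sigma_N$ reduces the theorem to (1) the deterministic H\"older estimate $\|\phi(t)-\phi(s)\|\leq L\|t-s\|_1^{1/(q-1)}$, which your symmetrized strong-convexity argument establishes correctly (the two one-sided inequalities $u_t(x_s)-u_t(x_t)\geq 2C_D\|x_t-x_s\|^q$ and its mirror add up to cancel everything except $h=u_t-u_s$, and each $f_i$, being convex and bounded on bounded sets, is indeed Lipschitz on bounded sets), and (2) scalar binomial variance bounds plus Jensen for the concave power $1/(q-1)\leq 1$. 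Uniformity in $t$ is automatic once $D\supset\cconv\M$ is fixed, $\M$ being bounded by Remark~\ref{r: minarebdd}, and measurability of $\omega\mapsto\argmin(\frac1n\sum_k F_k(\omega))$ is clear since this map takes finitely many values. What each approach buys: yours is elementary, yields the explicit exponent $\gamma=\frac{1}{2(q-1)}$, and, as you observe, uses neither hypothesis (a) nor the uniform convexity of $X$ beyond the reflexivity needed for minimizers to exist; on the other hand, it leans essentially on the power-type modulus in (b), whereas the gradient/type machinery is what the paper can still run when (b) is weakened to the SDP (Theorem~\ref{t: mainApplication}), where $\phi$ is merely continuous and no H\"older estimate is available. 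It is worth noting, though, that your reduction combined with the uniform continuity of $\phi$ on the compact simplex (Theorem~\ref{t: continuityargmin}) and Markov's inequality would still yield a rate-free, $t$-uniform bound in that weaker setting.
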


\begin{remark}\label{rem:interpretazione BE multiobjective}
    From the point of view of multiobjective optimization, this result can be read as follows. First of all $x_t$ is a Pareto Optimum point for the functions $f_1,\dots,f_N$ associated to the weights $(t_1,\dots,t_N)=t\in \Sigma_N$, i.e, $x_t$ is the unique minimum point for the function $u_t$ (see Remark \ref{rem: multiobjective U functions}). Second, each realization of the random variable $\frac{1}{n}\sum_{k=1}^n F_k(\omega)$ is a convex combination of $n$ functions chosen among $f_1,\dots,f_N$ where each term has a coefficient $\nicefrac{1}{n}$. Hence, for a fixed $\omega \in \Omega$, $\argmin(\frac{1}{n}\sum_{k=1}^n F_k(\omega))$ is a Pareto Optimum point for the functions $F_1(\omega),\dots,F_n(\omega)$ where $F_j(\omega)\in \{f_1,\dots,f_N\}$. Taking into account these interpretations of the quantities appearing in the inequality (\ref{eq: BE}), we conclude that the expected value of the error made by approximating $x_t$ with a special Pareto Optimum point (the one relating to all equal weights) of a convex multiobjective optimization problem with $n$ objective functions chosen among $f_1,\dots,f_N$, where each of the $n$ chosen functions has the probability $t_j$ of being $f_j$, for each $j=1,\dots,N$, is controlled by $\nicefrac{K}{n^{\gamma}}$.

    The most interesting feature of the estimation given by Theorem \ref{teo:EB} is that each solution of the multiobjective problem with $f_1,\dots,f_N$ objective functions can be approximated in a good way by considering the minimum point of the convex combination (with equal coefficients) of $n$ functions chosen among the objective functions of the original multiobjective problem. Moreover, most of the choices of the $n$ functions among the $N$ original objective functions give a good approximation (as pointed out in \cite{BE}, see also the introduction of the present paper). Indeed, on average we provide a good estimate of the error, that depends only on the number $n$ of the chosen functions and on their geometrical properties. Finally, it is worth pointing out that this estimate does not dependent on $t \in \Sigma_N$, in particular it does not depend on the choice of the Pareto Optimum point $x_t$. 
  
\end{remark}

The aim of the present section is to present a new version of the above result under the assumption (which markedly weakens conditions (b) contained in the above theorem) that the functions $f_1,\ldots,f_N$ involved are  G\^{a}teaux differentiable  \textit{U}-functions  satisfying the SDP. On the other hand, we show that our new version of Theorem~\ref{teo:EB} fails if we drop the SDP assumption, see Example \ref{e: BEfail}. 

Let us notice that  condition (b), in Theorem~\ref{teo:EB}, implies that each $f_i$ is uniformly convex on bounded sets. On the other hand, in \cite{BE} it is claimed that  condition (b) can be replaced by the assumption that each $f_i$ is  uniformly convex on bounded sets. Since the proof of this last assertion is not  contained in \cite{BE}, we decided, for the convenience of the reader, to include  in our manuscript
the corresponding standard modifications of some preliminary results, used in \cite{BE} to prove  Theorem~\ref{teo:EB}. The next technical lemma shows that for a finite family of uniformly convex functions there exists a common modulus of convexity with some additional properties. The proof is inspired by \cite{Zali}. 

\begin{lemma}\label{l: samemodulus}
Let $f_1,\dots,f_N$ be a family of real-valued continuous functions defined on a convex subset $D$ of a Banach space $X$. Suppose that each $f_i$ is uniformly convex on $D$. Then there exists a function $\delta\colon\R^+\to \R^+$ such that
\begin{enumerate}
\item for every $x,y\in D$ and $i\in\{1,\dots,N\}$ 
\[
 f_i( \frac{x + y}{2})\leq \frac{1}{2}( f_i(x) + f_i(y)) -\delta(\|x-y\|)
\]
holds.
\item the function $t\mapsto \nicefrac{\delta(t)}{t}$ is strictly increasing on $\R^+$.
\item $\lim_{t\to 0} \nicefrac{\delta(t)}{t}=0$.
\end{enumerate}
\end{lemma}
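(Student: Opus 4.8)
The plan is to build the common modulus as the pointwise minimum of the natural individual moduli, and then \emph{regularise} it so that the quotient $\delta(t)/t$ becomes strictly increasing without destroying the defining inequality. First I would, for each $i$, introduce the modulus
\[
 \delta_i(t):=\inf\Set{\tfrac12\bigl(f_i(x)+f_i(y)\bigr)-f_i\Bigl(\tfrac{x+y}{2}\Bigr)}{x,y\in D,\ \|x-y\|\geq t}.
\]
Uniform convexity of $f_i$ gives $\delta_i(t)>0$ for every $t\in\R^+$ (apply the definition with $\varepsilon=t$), and $\delta_i$ is nondecreasing since shrinking the constraint set $\{\|x-y\|\geq t\}$ can only raise the infimum. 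Setting $\hat\delta:=\min_{1\leq i\leq N}\delta_i$ yields a nondecreasing, strictly positive function on $\R^+$ with the crucial feature that \emph{any} $\delta\leq\hat\delta$ already satisfies (i): for a pair $x,y$ with $\|x-y\|=s$ one has $\delta(s)\leq\hat\delta(s)\leq\tfrac12(f_i(x)+f_i(y))-f_i(\tfrac{x+y}{2})$ for every $i$, because $(x,y)$ lies in the infimum set defining $\delta_i(s)$. Thus the whole problem reduces to producing a positive minorant $\delta\leq\hat\delta$ enjoying (ii) and (iii).

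The substantive analytic input is the estimate $\lim_{t\to0^+}\hat\delta(t)/t=0$, which I would establish by testing $\hat\delta$ against well-chosen pairs. Fixing $i$, I would pick a nondegenerate segment $[a,b]\subset D$ and a point $x_0$ in its relative interior at which the one-variable convex restriction of $f_i$ to the line through $a,b$ is differentiable (such points are all but countably many). Writing $v$ for the unit direction of the segment and using differentiability at $x_0$, the second-order difference $\tfrac12\bigl(f_i(x_0+\tfrac t2 v)+f_i(x_0-\tfrac t2 v)\bigr)-f_i(x_0)$ is $o(t)$ as $t\to0$. Since this quantity bounds $\delta_i(t)$ from above, we get $\delta_i(t)/t\to0$, and taking the minimum over the finitely many indices gives $\hat\delta(t)/t\to0$.

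With this in hand, the monotonisation proceeds in two moves. I would first pass to the largest nondecreasing minorant of the quotient, $\bar g(t):=\inf_{s\geq t}\hat\delta(s)/s$, which is nondecreasing, satisfies $\bar g(t)\leq\hat\delta(t)/t$ and $\bar g>0$ on $\R^+$, and has $\bar g(0^+)=\inf_{s>0}\hat\delta(s)/s=0$ by the previous paragraph. Then I would define
\[
 \delta(t):=\frac{2}{t}\int_0^t s\,\bar g(s)\,ds,\qquad\text{so that}\qquad \frac{\delta(t)}{t}=q(t):=\frac{2}{t^2}\int_0^t s\,\bar g(s)\,ds .
\]
Because $\bar g$ is nondecreasing one has $q(t)\leq\frac{2}{t^2}\bar g(t)\int_0^t s\,ds=\bar g(t)$, hence $\delta(t)\leq t\,\bar g(t)\leq\hat\delta(t)$ and (i) is preserved; moreover $\delta>0$ on $\R^+$. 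For (ii), writing $H(t)=\int_0^t s\bar g(s)\,ds$ a direct computation gives $q'(t)=\tfrac{2}{t^3}\bigl(t^2\bar g(t)-2H(t)\bigr)$, and since $2H(t)\leq\bar g(t)t^2$ with equality only if $\bar g$ is constant on $[0,t]$ — which is excluded as $\bar g(0^+)=0<\bar g(t)$ — we get $q'(t)>0$, i.e. $\delta(t)/t$ strictly increasing. Finally (iii) follows from $0<q(t)\leq\bar g(t)\to0$.

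The main obstacle is exactly the tension in (ii)--(iii): I must make the quotient \emph{strictly} increasing while keeping $\delta$ below a common modulus that is only known to be \emph{nondecreasing}, and simultaneously force the quotient down to $0$ at the origin. The weighted-average gadget $\frac{2}{t}\int_0^t s\,\bar g(s)\,ds$ is what reconciles these demands, but it is only legitimate once $\bar g(0^+)=0$ is available; consequently the genuinely delicate point is not the positivity of the modulus (immediate from uniform convexity) but the limit $\hat\delta(t)/t\to0$, whose proof rests on the differentiability of convex functions along line sections.
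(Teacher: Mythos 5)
Your construction is essentially sound in architecture, but there is one genuine hole: the unproven claim that $\bar g>0$ on $\R^+$, on which everything after the monotonisation step depends. Since $\bar g(t)=\inf_{s\geq t}\hat\delta(s)/s$ is an infimum over the \emph{unbounded} set $[t,\infty)$, the only properties you have established for $\hat\delta$ at that point — nondecreasing and strictly positive — do not exclude $\liminf_{s\to\infty}\hat\delta(s)/s=0$ (think of a nondecreasing function behaving like $\sqrt{s}$ at infinity). On compact pieces $[t,T]$ the quotient is bounded below by $\hat\delta(t)/T>0$, so a vanishing infimum can only come from the tail; but then it propagates to \emph{every} $t$, giving $\bar g\equiv 0$, hence $\delta\equiv 0\notin\R^+$ and $q\equiv 0$, so (ii) fails outright. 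Your equality-case argument for $q'(t)>0$ also explicitly invokes $0<\bar g(t)$. The statement allows $D$ unbounded, so this is a real issue, not a degenerate one (for bounded $D$ of diameter $R$ positivity is immediate from $\bar g(t)\geq\hat\delta(t)/R$, though there you would instead have to cap $\bar g$ for $t>R$, where $\delta_i(t)=\inf\emptyset=+\infty$). The missing ingredient is a convexity property of the modulus itself: writing $\Delta_i(x,y)=\tfrac12(f_i(x)+f_i(y))-f_i(\tfrac{x+y}{2})$ and letting $m$ be the midpoint of $[x,y]\subset D$, convexity gives $\Delta_i(x,y)\geq\Delta_i(x,m)+\Delta_i(m,y)$ (this reduces to $f_i(q_1)+f_i(q_2)\geq 2f_i(m)$ at the quarter points), whence $\delta_i(2t)\geq 2\delta_i(t)$; iterating, for $2^k t\leq s<2^{k+1}t$ one gets $\hat\delta(s)/s\geq\tfrac12\,\hat\delta(t)/t$, so $\bar g(t)\geq\tfrac12\,\hat\delta(t)/t>0$. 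With that lemma inserted, your argument closes (the $o(t)$ estimate at the origin via differentiability of the one-dimensional convex section is correct; in the averaging step, the pointwise identity $H'(t)=t\bar g(t)$ only holds at continuity points of $\bar g$, but the substitution $q(t)=2\int_0^1 u\,\bar g(tu)\,du$ makes the strict monotonicity of $q$ transparent without differentiating).

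For comparison, the paper takes a much shorter route: it imports from Z\u{a}linescu (Remark~2.6 of the 1983 paper combined with Proposition~3.5.1 of the book on convex analysis) a modulus $\delta_i$ satisfying the midpoint inequality together with the property that $t\mapsto\delta_i(t)/t^2$ is \emph{nondecreasing}; then $\delta_i(t)/t\leq t\,\delta_i(s)/s^2$ for $0<t<s$ yields both the limit (iii) and the strict monotonicity (ii) in two lines, and $\delta=\min_i\delta_i$ finishes, since the minimum of finitely many functions with strictly increasing quotients again has a strictly increasing quotient. Your doubling inequality $\delta_i(2t)\geq 2\delta_i(t)$ is precisely a weak, self-contained surrogate for that quadratic-ratio monotonicity, and your regularisation gadget substitutes for the structure the citation provides for free. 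So: a genuinely different and more elementary approach, attractive because it avoids the external references, but as written it has a gap exactly at the positivity of $\bar g$, which is where the convexity of the $f_i$ (and not mere monotonicity of $\hat\delta$) must be used a second time.
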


\begin{proof}
Let $i\in \{1,\dots,N\}$. Since $f_i$ is uniformly convex on $D$, combining \cite[Remark 2.6]{Zali} with \cite[Proposition 3.5.1]{ZaliBook} there exists a map $\delta_i\colon \R^+\to \R^+$ such that:
\begin{enumerate}[(a)]
\item for every $x,y\in D$
\begin{equation*}
 f_i( \frac{x + y}{2})\leq \frac{1}{2}( f_i(x) + f_i(y)) -\delta_i(\|x-y\|)
\end{equation*}
holds;
\item the function $t\mapsto \nicefrac{\delta_i(t)}{t^2}$ is nondecreasing on $\R^+$.
\end{enumerate}
Let $s>t>0$, then we have
\begin{equation*}
    \dfrac{\delta_i(t)}{t^2}\leq \dfrac{\delta_i(s)}{s^2},
\end{equation*}
hence, it follows that
\begin{equation*}
   0< \dfrac{\delta_i(t)}{t}\leq t\dfrac{\delta_i(s)}{s^2},
\end{equation*}
which goes to zero when $t$ goes to zero. Similarly, let $s>t>0$, then we have
\begin{equation*}
    \dfrac{\delta_i(t)}{t}\leq t\dfrac{\delta_i(s)}{s^2}<\frac{\delta_i(s)}{s}.
\end{equation*}
Therefore, the function $\nicefrac{\delta_i(t)}{t}$ is strictly increasing. Finally, defining 
$$
\delta(t)=\min\{\delta_1(t),\dots,\delta_N(t)\},
$$ 
we get the desired function.
\end{proof}

Now, we are in the position of stating the main result of this section. 

\begin{theorem}\label{t: mainApplication}
Let $X$ be an uniformly convex Banach space. Let $f_1,\dots,f_N$ be a finite family of \textit{U}-functions which additionally satisfy the following conditions:
\begin{enumerate}[(a)]
\item for each $i\in\{1,\dots,N\}$, $f_i$ is G\^{a}teaux differentiable at each point of $X$.
\item for each $i\in\{1,\dots,N\}$, $f_i$ has SDP.
\end{enumerate}
Then there exists a sequence  $\{\epsilon_n\}$ of positive numbers converging to $0$, such that if $t=(t_1,\ldots,t_N)\in \Sigma_N$  and $\{F_k\}$ is a sequence of independent random variables defined on the same probability space $(\Omega,\mu)$ which take the values $f_1,\dots, f_N$ with probability $t_1,\dots, t_N$, we have, for each $n\in\N$
\[
 \int_{\Omega}\|\argmin(\frac{1}{n}\sum_{k=1}^n F_k(\omega))-\phi(t)\|d\mu\leq\epsilon_n.
\]
\end{theorem}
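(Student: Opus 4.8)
The plan is to recognize the random averaged functions as $u$-functions with \emph{empirical} weights, reduce matters to a compact convex set on which the data become uniformly convex, prove a deterministic modulus estimate, and push it through a Chebyshev bound whose outcome is uniform in $t$. First I would reformulate the random quantity: for fixed $\omega\in\Omega$, if $f_i$ occurs $n_i(\omega)$ times among $F_1(\omega),\dots,F_n(\omega)$, then $\frac1n\sum_{k=1}^nF_k(\omega)=u_{\tau_n(\omega)}$, where $\tau_n(\omega)=(n_1(\omega)/n,\dots,n_N(\omega)/n)\in\Sigma_N$ is the empirical weight vector; hence $\argmin\bigl(\frac1n\sum_{k=1}^nF_k(\omega)\bigr)=\phi(\tau_n(\omega))$, and since $\tau_n$ takes only finitely many values, $\omega\mapsto\phi(\tau_n(\omega))$ is a simple, hence Bochner-integrable, map. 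Each coordinate $\tau_n(\omega)_i$ is an average of $n$ independent Bernoulli variables of mean $t_i$, so $\int_\Omega\tau_n(\omega)_i\,d\mu=t_i$ and, crucially, $\int_\Omega\norm{\tau_n(\omega)-t}^2\,d\mu\le C_0/n$ with $C_0$ depending only on $N$, uniformly in $t\in\Sigma_N$. This uniform second-moment bound is what will ultimately render the final estimate independent of $t$.

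Next I would move to a compact set. Since each $f_i$ has the SDP, so does each $u_t$ (Proposition~\ref{p: sumsdp}), whence by Theorem~\ref{t: continuityargmin}(ii) the map $\phi$ is continuous on the compact simplex $\Sigma_N$; therefore $\M=\phi(\Sigma_N)$ is compact and $K:=\cconv\M$ is a compact convex set (the closed convex hull of a compact subset of a Banach space is compact) containing $\phi(t)$ and every $\phi(\tau_n(\omega))$. On $K$ each $f_i$ is continuous and strictly convex (the SDP implies strict convexity), hence uniformly convex by Proposition~\ref{p: scimpliesuc}, and Lemma~\ref{l: samemodulus} supplies a single modulus $\delta\colon\R^+\to\R^+$, valid for all $f_1,\dots,f_N$ on $K$ and strictly increasing with $\delta(s)/s\to0$. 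As a convex combination of the $f_i$ with coefficients summing to $1$, each $u_t$ is then uniformly convex on $K$ with this very modulus $\delta$.

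The core is a deterministic estimate for fixed $\omega$. Writing $x_t=\phi(t)$ and $y=\phi(\tau_n(\omega))$, both lying in $K$, the uniform convexity of $u_t$ at its minimizer $x_t$ gives $\delta(\norm{y-x_t})\le\tfrac12\bigl(u_t(y)-u_t(x_t)\bigr)$, since $u_t\bigl(\frac{x_t+y}{2}\bigr)\ge u_t(x_t)$. Because $y$ minimizes $u_{\tau_n(\omega)}$ we have $u_{\tau_n(\omega)}(y)\le u_{\tau_n(\omega)}(x_t)$, so $u_t(y)-u_t(x_t)\le(u_t-u_{\tau_n(\omega)})(y)-(u_t-u_{\tau_n(\omega)})(x_t)$. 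As $u_t-u_{\tau_n(\omega)}=\sum_i(t_i-\tau_n(\omega)_i)f_i$ and the $f_i$ are bounded on the bounded set $K$, this last quantity is at most $2C\norm{t-\tau_n(\omega)}$ with $C=\max_i\sup_K|f_i|$. Combining, $\delta(\norm{y-x_t})\le C\norm{t-\tau_n(\omega)}$, so by monotonicity of $\delta$ the error $\norm{\phi(\tau_n(\omega))-\phi(t)}$ is forced to be small whenever $\norm{\tau_n(\omega)-t}$ is.

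Finally I would integrate. Fix $\eta>0$ and put $r=\delta(\eta)$. On the event $\{C\norm{\tau_n-t}\le r\}$ the previous estimate forces $\norm{\phi(\tau_n)-\phi(t)}\le\eta$, while on the complement it is at most $\diam K=:M$; hence $\int_\Omega\norm{\phi(\tau_n)-\phi(t)}\,d\mu\le\eta+M\,\mu(C\norm{\tau_n-t}>r)$. Chebyshev's inequality together with the uniform second-moment bound yields $\mu(C\norm{\tau_n-t}>r)\le C^2C_0/(n\,\delta(\eta)^2)$, so that $\int_\Omega\norm{\phi(\tau_n)-\phi(t)}\,d\mu\le\eta+MC^2C_0/(n\,\delta(\eta)^2)$, a bound independent of $t$. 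Setting $\epsilon_n:=\sup_{t\in\Sigma_N}\int_\Omega\norm{\phi(\tau_n)-\phi(t)}\,d\mu$, one gets $\limsup_n\epsilon_n\le\eta$ for every $\eta>0$, hence $\epsilon_n\to0$, which is the assertion. The main obstacle is precisely this last passage: the SDP yields only a qualitative modulus $\delta$ (with no power-type lower bound, unlike condition~(b) of Theorem~\ref{teo:EB}), so one cannot invert it to read off a rate, and convergence must be extracted through the split-and-Chebyshev device; the uniformity in $t$ survives exactly because both $\delta$ (coming from the fixed compact set $K$) and the variance bound $C_0/n$ are independent of $t$. Notably, this route relies only on the SDP---through the continuity of $\phi$ and Proposition~\ref{p: scimpliesuc}---and on the uniform variance control, and not on the G\^ateaux differentiability of hypothesis~(a).
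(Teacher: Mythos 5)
Your proof is correct, and it takes a genuinely different route from the paper's. Both arguments share the same opening moves: the SDP gives continuity of $\phi$ (Theorem~\ref{t: continuityargmin}), hence $\mathcal{M}_1=\cconv(\phi(\Sigma_N))$ is compact, and Proposition~\ref{p: scimpliesuc} together with Lemma~\ref{l: samemodulus} produces one modulus $\delta$ for all the $f_i$ on $\mathcal{M}_1$; both then split $\Omega$ into a good event and a bad event of small measure. They diverge at the core estimate. You rewrite $\frac1n\sum_{k=1}^nF_k(\omega)$ as $u_{\tau_n(\omega)}$ with empirical weights and compare the minimizers of $u_t$ and $u_{\tau_n(\omega)}$ purely through function values (your midpoint inequality is exactly Lemma~\ref{l: BELemma2}), so the whole problem collapses to a scalar Chebyshev bound on $\|\tau_n-t\|$ with variance $O(1/n)$ uniformly in $t$. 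The paper instead linearizes at $x_t$: hypothesis (a) is used to define mean-zero $X^*$-valued i.i.d.\ variables $G^t_k$ (with $G^t_k(\omega)=f_i'(x_t)$ when $F_k(\omega)=f_i$); Lemma~\ref{l: BELemma} --- whose exponent $p$ is the type of $X^*$ and whose validity rests on the uniform convexity of $X$ --- gives $\int_\Omega\|\frac1n\sum_kG^t_k\|\,d\mu\le Cn^{\frac1p-1}$; and a line-integration argument based on Lemma~\ref{l: BELemma2} converts smallness of the averaged gradient at $x_t$ into smallness of $\|x-x_t\|$, yielding $\epsilon_n=\diam(\mathcal{M}_1)n^{\frac{1-p}{2p}}+\eta\bigl(Cn^{\frac{1-p}{2p}}/2\bigr)$. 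Your route is more elementary (real-valued Bernoulli variances instead of vector-valued probability in $X^*$) and, as you observe, it uses neither the G\^ateaux differentiability of the $f_i$ nor --- a point worth stressing --- the uniform convexity of $X$; so it actually proves a formally stronger theorem, valid in any reflexive space and without hypothesis (a). What the paper's heavier machinery buys is the structure of the estimate: the bad-event probability decays like the explicit power $n^{\frac{1-p}{2p}}$ dictated by the geometry of $X$, in the spirit of the original rate $K/n^{\gamma}$ of Theorem~\ref{teo:EB}, whereas in your bound $\inf_{\eta>0}\bigl(\eta+\diam(\mathcal{M}_1)C^2C_0/(n\,\delta(\eta)^2)\bigr)$ the two error sources remain entangled through the unknown modulus $\delta$. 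Two harmless slips: Lemma~\ref{l: samemodulus} asserts that $s\mapsto\delta(s)/s$ is strictly increasing, not $\delta$ itself --- but strict monotonicity of $\delta$ follows at once from its conditions (i)--(ii), which is all your inversion step needs; and to have $\epsilon_n>0$ literally, replace your supremum by $\max\{\epsilon_n,2^{-n}\}$.
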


As we have already pointed out in Remark \ref{rem:interpretazione BE multiobjective}, also Theorem \ref{t: mainApplication} can be interpreted as an approximation result for a multiobjective optimization problem. Beside the weaker assumptions with respect to those in Theorem \ref{teo:EB}, it is worth pointing out that also the errors average upper bound $\varepsilon_n$ can be explicitly computed. Indeed, from the proof of Theorem \ref{t: mainApplication} we will see that 
$$\epsilon_n=\diam(\mathcal{M}_1)n^\beta+\eta({n^{\beta}}),$$ where $\mathcal{M}_1$ is the closed convex hull of the set of Pareto Optimum points for $f_1,\dots,f_N$, $\beta$ is a negative real constant and $\eta$ is a function vanishing as the argument goes to $0$ and depending on the geometrical properties of the objective functions.

In order to prove our main result, we recall the statements of two technical lemmata contained in \cite{BE}. Let us mention that the constant $p$ which appears in the following result is exactly the type of the Banach space $X^*$ (for the definition of type and cotype see, e.g., \cite[p. 137]{ALKA}).

\begin{lemma}\label{l: BELemma}
Let $X$ be a uniformly convex Banach space and let $A$ be a bounded subset of $X^*$. Then, there exist $p\in(1,2]$ and $C>0$ such that, 
\begin{equation*}
 \int_{\Omega}\|\frac{1}{n}\sum_{k=1}^n G_k - x^*\|d\mu\leq C n^{\frac{1}{p}-1}
\end{equation*}
whenever $x^*\in X^*$ and
$\{G_k\}$ is a sequence of independent random variables defined on the same probability space $(\Omega,\mu)$ with values in $A$, such that   $$\int_{\Omega}G_k d\mu=x^*,\qquad\qquad k\in\N.$$ 
\end{lemma}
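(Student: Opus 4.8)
The plan is to deduce the estimate from the classical theory of sums of independent mean-zero random vectors taking values in a Banach space of nontrivial (Rademacher) type, applied to the dual space $X^*$, where the variables $G_k$ live.

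\emph{Step 1: nontrivial type of $X^*$.} Since $X$ is uniformly convex it is super-reflexive, and super-reflexivity is inherited by the dual, so $X^*$ is super-reflexive as well. A super-reflexive space cannot have $\ell_1$ finitely representable in it (otherwise $\ell_1$ would be reflexive), hence it is B-convex and therefore has nontrivial type (see \cite{ALKA}): there exist $p\in(1,2]$ and $T>0$ such that for every finite family $y_1^*,\dots,y_m^*\in X^*$,
\begin{equation*}
\int_\Omega\Big\|\sum_{k=1}^m\varepsilon_k\,y_k^*\Big\|\,d\mu\le T\Big(\sum_{k=1}^m\|y_k^*\|^p\Big)^{1/p},
\end{equation*}
where $\{\varepsilon_k\}$ is a Rademacher sequence (defined on an enlarged probability space still denoted $(\Omega,\mu)$); here the $L^2$-norm in the usual definition of type has been replaced by the $L^1$-norm via Kahane's inequality. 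This is exactly the value of $p$ announced before the statement.

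\emph{Step 2: reduction to a symmetric sum.} Put $R:=\sup_{a^*\in A}\|a^*\|<\infty$ and $Z_k:=G_k-x^*$. The $Z_k$ are independent with $\int_\Omega Z_k\,d\mu=0$; moreover $x^*=\int_\Omega G_k\,d\mu$ lies in the norm-closed convex hull of $A$, so $\|x^*\|\le R$ and $\|Z_k\|\le 2R$ pointwise. Since the quantity to be estimated equals $\frac1n\int_\Omega\|\sum_{k=1}^n Z_k\|\,d\mu$, it suffices to bound $\int_\Omega\|\sum_{k=1}^n Z_k\|\,d\mu$. Introducing independent copies $Z_k'$ of the $Z_k$ and a Rademacher sequence, Jensen's inequality and the symmetry of $Z_k-Z_k'$ give the standard symmetrization chain
\begin{equation*}
\int_\Omega\Big\|\sum_{k=1}^n Z_k\Big\|\,d\mu\le\int_\Omega\Big\|\sum_{k=1}^n(Z_k-Z_k')\Big\|\,d\mu=\int_\Omega\Big\|\sum_{k=1}^n\varepsilon_k(Z_k-Z_k')\Big\|\,d\mu\le 2\int_\Omega\Big\|\sum_{k=1}^n\varepsilon_k Z_k\Big\|\,d\mu.
\end{equation*}

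\emph{Step 3: apply the type inequality.} Conditioning on the values of $\{Z_k\}$ and applying the inequality of Step~1 to the vectors $Z_k\in X^*$, then using the concavity of $t\mapsto t^{1/p}$ together with $\|Z_k\|\le 2R$, we get
\begin{equation*}
\int_\Omega\Big\|\sum_{k=1}^n\varepsilon_k Z_k\Big\|\,d\mu\le T\int_\Omega\Big(\sum_{k=1}^n\|Z_k\|^p\Big)^{1/p}d\mu\le T\Big(\sum_{k=1}^n\int_\Omega\|Z_k\|^p\,d\mu\Big)^{1/p}\le T\big(n(2R)^p\big)^{1/p}=2TR\,n^{1/p}.
\end{equation*}
Chaining the last three displays and dividing by $n$ yields $\int_\Omega\|\tfrac1n\sum_{k=1}^n G_k-x^*\|\,d\mu\le 4TR\,n^{\frac1p-1}$, which is the assertion with $C:=4TR$.

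The only substantial ingredient is the structural fact invoked in Step~1 — that uniform convexity of $X$ forces $X^*$ to have type $p>1$; everything else is the routine symmetrization argument and a direct application of the type estimate. The point requiring care is that the relevant type is that of $X^*$ rather than of $X$, since the randomized functionals $G_k$ are $X^*$-valued.
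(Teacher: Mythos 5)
Your proof is correct and follows exactly the route the paper intends: the paper itself gives no proof of this lemma, recalling it from \cite{BE}, but its remark that the exponent $p$ ``is exactly the type of the Banach space $X^*$'' is precisely the mechanism you implement --- uniform convexity of $X$ gives super-reflexivity, which passes to $X^*$ and yields nontrivial type $p\in(1,2]$ by Pisier's theorem, after which the standard symmetrization chain and the $L^1$-form of the type inequality (via Kahane) produce the rate $n^{\frac{1}{p}-1}$ with a constant depending only on $A$ and the type constant. Your quantifier handling is also right ($C=4TR$ with $R=\sup_{a^*\in A}\|a^*\|$ is uniform over all admissible $x^*$ and $\{G_k\}$, using $\|x^*\|\leq R$ since $x^*$ lies in the closed convex hull of $A$), so your argument in fact supplies the details the paper delegates to the reference.
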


\noindent The next result is contained in a slightly different formulation in \cite{BE}. For the sake of completeness we  include a proof.

\begin{lemma}\label{l: BELemma2}
Let $f_1,\dots, f_N$ be a finite family of real-valued \textit{U}-functions defined on $X$ and let $D\subset X$ be a convex subset. Let $t\in \Sigma_N$, $x_t=\phi(t)$, and suppose that there exists a function $\delta\colon \R^+\to \R^+$ such that for every $x,y\in D$ and $i\in\{1,\dots,N\}$
\begin{equation*}
 f_i\left(\frac{x+y}{2}\right)\leq \frac{1}{2} (f_i(x)+f_i(y)) - \delta(\|x-y\|)
\end{equation*}
holds. Then, for each $z\in D$, we have 
\begin{equation*}
 2 \delta(\|x_t-z\|)<u_t(z) - u_t(x_t).
\end{equation*}
\end{lemma}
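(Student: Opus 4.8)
The plan is to transfer the common uniform-convexity estimate from the individual $f_i$ to the scalarized function $u_t$, and then to exploit that $x_t$ is the unique global minimizer of $u_t$.

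First I would observe that $u_t$ satisfies the very same inequality, with the very same modulus $\delta$. Indeed, fixing $x,y\in D$, multiplying the hypothesis inequality for $f_i$ by $t_i\geq 0$ and summing over $i\in\{1,\dots,N\}$, the term $\sum_i t_i\,\delta(\|x-y\|)$ collapses to $\delta(\|x-y\|)$ because $\sum_i t_i=1$; hence
\begin{equation*}
u_t\!\left(\frac{x+y}{2}\right)\leq \frac{1}{2}\bigl(u_t(x)+u_t(y)\bigr)-\delta(\|x-y\|).
\end{equation*}
Next I would apply this inequality to the pair $x=x_t$ and $y=z$ (both lying in $D$: here $z\in D$ by hypothesis, while in the intended application $D$ is the convex set containing $\M$, so in particular $x_t\in D$, and convexity of $D$ then guarantees $\tfrac{x_t+z}{2}\in D$). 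This yields
\begin{equation*}
u_t\!\left(\frac{x_t+z}{2}\right)\leq \frac{1}{2}\bigl(u_t(x_t)+u_t(z)\bigr)-\delta(\|x_t-z\|).
\end{equation*}

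Finally I would use that $x_t=\argmin(u_t)$ is the unique global minimizer of the \textit{U}-function $u_t$. Since $u_t$ is strictly convex, for $z\neq x_t$ the midpoint $\tfrac{x_t+z}{2}$ differs from $x_t$ and therefore $u_t\!\left(\tfrac{x_t+z}{2}\right)>u_t(x_t)$ strictly. Combining this strict inequality with the previous display gives $u_t(x_t)<\tfrac12\bigl(u_t(x_t)+u_t(z)\bigr)-\delta(\|x_t-z\|)$, which rearranges to the desired $2\delta(\|x_t-z\|)<u_t(z)-u_t(x_t)$. Note that the case $z=x_t$ is automatically excluded, as $\delta$ is only defined on $\R^+$.

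I do not expect a genuine obstacle here: the argument is essentially a one-line descent inequality once $u_t$ is seen to inherit the modulus $\delta$. The only points requiring care are the source of the \emph{strict} inequality — which comes from uniqueness of the minimizer, equivalently from strict convexity of $u_t$ (guaranteed by the \textit{U}-function assumption) — and the bookkeeping ensuring that $x_t$ and the midpoint $\tfrac{x_t+z}{2}$ lie in $D$, so that the uniform-convexity estimate is legitimately applicable.
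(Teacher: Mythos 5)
Your proposal is correct and is essentially the paper's own argument: the paper's one-line proof is precisely the chain $u_t(x_t)<u_t\bigl(\tfrac{x_t+z}{2}\bigr)\leq\tfrac{1}{2}(u_t(x_t)+u_t(z))-\delta(\|x_t-z\|)$, where the first (strict) inequality comes from $x_t$ being the unique minimizer of the strictly convex $u_t$ and the second from summing the hypothesis with weights $t_i$. Your write-up merely makes explicit the two points the paper leaves implicit (the inheritance of the modulus $\delta$ by $u_t$ and the membership of $x_t$ and the midpoint in $D$), so there is nothing to add.
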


\begin{proof} It is sufficient to observe that, for every $z\in D$, we have
\begin{equation*}
 u_t(x_t)<u_t\left(\frac{x_t+z}{2}\right)<\frac{1}{2}(u_t(x_t)+u_t(z))-\delta(\|x_t-z\|),
\end{equation*}
from which we get the assertion.
\end{proof}

\begin{proof}[Proof of Theorem \ref{t: mainApplication}]
Let us consider  the set $\mathcal{M}$ of all minimum points of convex combinations of the functions $\{f_1,\dots,f_N\}$, i.e, $\mathcal{M}=\phi(\Sigma_N)$. Since $\Sigma_N$ is a compact set and, by Theorem \ref{t: continuityargmin}, the function $\phi$ is continuous, we have that $\mathcal{M}$ is a compact set. Therefore, $\mathcal{M}_1=\cconv(\mathcal{M})$, the closure of the convex hull of $\mathcal{M}$,  is compact too (see, e.g.,  \cite[Theorem V.2.6]{Dunford Schwartz}). Hence, by Proposition \ref{p: scimpliesuc}, the functions $f_1,\dots,f_N$ restricted to the set $\mathcal{M}_1$ are uniformly convex. Therefore, by Lemma \ref{l: samemodulus}, there exists a function $\delta\colon\R^+\to \R^+$, satisfying conditions (i)-(iii) in Lemma \ref{l: samemodulus}.
If $k\in \N$, $t\in \Sigma_N$, and $x_t=\phi(t)$, we define a random variable $G_k^t$ over $(\Omega,\mu)$ with values in $X^*$ as follows: if $\omega\in\Omega$ and $F_k(\omega)=f_i$,  put $G_k^t(\omega)=f_{i}'(x_t)$. Then,  $\{G_k^t\}_k$ is a sequence of  $X^*$-valued independent random variables  taking values $f_{1}'(x_t),\ldots,f_{N}'(x_t)$ with probabilities $t_1,\ldots,t_N$. Moreover, since the functions $f_1,\ldots,f_N$ are bounded on bounded sets, it is easy to see that $\sup_{x\in \mathcal{M}_1}\max_{i}\|f'_i(x)\|$ is finite and hence there exists a bounded set $A\subset X^*$, such that  $G_k^t$ takes values in $A$, whenever $k\in\N$ and $t\in\Sigma_N$. Since, for each $k\in \N$ and $t\in\Sigma_N$, we have $\int_{\Omega}G_k^t d\mu=\sum_{i=1}^N t_i f_i'(x_t)=0$,  Lemma \ref{l: BELemma} ensures existence of constants $p\in(1,2]$ and $C>0$ such that 
\begin{equation*}
 \int_{\Omega}\|\frac{1}{n}\sum_{k=1}^{n}G_k^t \|d\mu\leq C n^{\frac{1}{p}-1},
\end{equation*}
whenever   $t\in\Sigma_N$. 
Therefore
\begin{equation*}
\mu(\Omega_1)\leq n^{\frac{1-p}{2p}},
\end{equation*}
where $\Omega_1=\{\omega\in\Omega\colon\|\frac{1}{n}\sum_{k=1}^{n}G^t_k(\omega)\|\geq C n^{\frac{1-p}{2p}}\}$. Let $\omega\in \Omega\setminus\Omega_1$ and $n\in \N$, then for each $k\leq n$, $G^t_k(\omega)=f_{i_k}'(x_t)$ for some $i_k\in \{1,\dots,N\}$. Let $x=\argmin(\frac{1}{n}\sum_{k=1}^n f_{i_k})$ and
\begin{equation*}
\psi(r)=\frac{1}{n}\sum_{k=1}^{n}f_{i_k}(x+r(x_t-x)).
\end{equation*}
The map $\psi'$ is increasing over the unit interval $[0,1]$ and attains its minimum at $r=0$. By Lemma \ref{l: BELemma2} we have
\begin{equation*}
\begin{split}
2\delta(\|x-x_t\|)&\leq \psi(1)-\psi(0)=\int_0^1\psi'(r)dr\leq |\psi'(1)|\\
&\leq\left\|\frac{1}{n}\sum_{k=1}^{n}f_{i_k}'(x_t)\right\|\|x-x_t\|\leq C n^\frac{1-p}{2p}\|x-x_t\|.
\end{split}
\end{equation*}
Defining $h(r)=\delta(r)/r$ and $\eta(r)=h^{-1}(r)$ we get
\begin{equation*}
 \frac{\delta(\|x-x_t\|)}{\|x-x_t\|}\leq \frac{Cn^{\frac{1-p}{2p}}}{2},
\end{equation*}
from which we get
\begin{equation*}
\|x-x_t\|\leq\eta\left(\frac{Cn^{\frac{1-p}{2p}}}{2}\right).
\end{equation*}
Therefore,
\begin{equation*}
\begin{split}
&\int_{\Omega}\|\argmin\left(\frac{1}{n}\sum_{k=1}^n F_k(\omega)\right)-x_t\|d\mu \leq \int_{\Omega_1}\|\argmin\left(\frac{1}{n}\sum_{k=1}^n F_k(\omega)\right)-x_t\|d\mu \\&
+ \int_{\Omega\setminus \Omega_1}\|\argmin\left(\frac{1}{n}\sum_{k=1}^n F_k(\omega)\right)-x_t\|d\mu
\leq  \diam(\mathcal{M}_1)n^{\frac{1-p}{2p}}+\eta(\frac{Cn^{\frac{1-p}{2p}}}{2}).
\end{split}
\end{equation*} If, for $n\in\N$, we define $$ \epsilon_n=\diam(\mathcal{M}_1)n^{\frac{1-p}{2p}}+\eta(\frac{Cn^{\frac{1-p}{2p}}}{2}),$$  the proof is concluded.
\end{proof}

\begin{example}\label{e: BEfail}
Let $f_1$ and $f_2$ as in Example \ref{e: argmindiscontinuo}. It is clear that both $f_1$ and $f_2$ are G\^ateaux differentiable on $X$. 
If $t=0$, then $\argmin(u_t)=\{0\}$, where $u_t= (1-t) f_1 +t f_2$. While, if $t\in (0,1]$, then $\|\argmin(u_t)\|\geq 1$. Suppose that $t\in (0,1)$. Let $(\Omega,\mu)$ be a probability space. Let us consider random variables $F_k\colon\Omega\to \{f_1,f_2\}$ that satisfy $\mu(\{\omega\in \Omega\colon F_k(\omega)=f_1\})=1-t$ and $\mu(\{\omega\in \Omega\colon F_k(\omega)=f_2\})=t$. Let 
\begin{equation*}
\begin{split}
\Omega_1&=\left\{\omega\in\Omega\colon \argmin(\frac{1}{n}\sum_{k=1}^n F_k(\omega))=\{0\}\right\}\\
&=\{\omega\in\Omega\colon F_k(\omega)=f_1 \,\forall \,k=1,\dots,n\}.
\end{split}
\end{equation*}
Then, we observe that $\mu(\Omega_1)=(1-t)^n$. Which implies,
\begin{equation*}
\begin{split}
\int_{\Omega}\|\argmin\left(\frac{1}{n}\sum_{k=1}^n F_k(\omega)\right) - x_t\|d\mu &\geq \int_{\Omega_1}\|\argmin\left(\frac{1}{n}\sum_{k=1}^n F_k(\omega)\right) - x_t\|d\mu\\
&\geq \mu(\Omega_1)=(1-t)^n.
\end{split}
\end{equation*}
Therefore, one cannot obtain an estimate which does not depend on  $t$.
\end{example}

\begin{remark}\label{rem:costruzioneCarloJacopo}
Let us point out that there exist
 G\^{a}teaux differentiable \textit{U}-functions with the SDP not satisfying condition (b), in Theorem~\ref{teo:EB}. To show this,
notice that  in \cite{DEBESOM-ALUR} is provided, in each infinite-dimensional separable Banach space (also in every reflexive space), an example of an average locally uniformly rotund, G\^{a}teaux differentiable norm $|\cdot|$ which is not locally uniformly rotund. For reflexive Banach spaces, by \cite{DEBESOM-ALUR}, a norm is average locally uniformly rotund, if and only if each $x\in S_X$ is strongly exposed by each functional which supports $x$. Hence, Proposition~\ref{t: liftingnorms} 
implies that the function $|\cdot|^2$ is 
a
 G\^{a}teaux differentiable \textit{U}-function with the SDP not satisfying condition (b), in Theorem~\ref{teo:EB}.
 
 The above construction shows that there are situations in which the hypotheses of Theorem~\ref{t: mainApplication} are satisfied but Theorem~\ref{teo:EB} cannot be applied.
\end{remark}

\end{document}